\makeatletter \@namedef{subjclassname@2010}{
  \textup{2010} Mathematics Subject Classification}
\definecolor{gray}{rgb}{0.65,0.65,0.65}
\definecolor{ultramarine}{rgb}{0.07, 0.04, 0.56}
\begin{document}
\setlength{\unitlength}{1mm}

\newtheorem{thm}{Theorem}[section]
\newtheorem{lem}[thm]{Lemma}
\newtheorem{prop}[thm]{Proposition}
\newtheorem{defn}[thm]{Definition}
\newtheorem{cor}[thm]{Corollary}
\newtheorem{rmk}[thm]{Remark}
\newtheorem{conj}[thm]{Conjecture}

\newcommand{\QQ}{\mathbb{Q}}
\newcommand{\ZZ}{\mathbb{Z}}
\newcommand{\RR}{\mathbb{R}}
\newcommand{\NN}{\mathbb{N}}
\newcommand{\CC}{\mathbb{C}}
\newcommand{\XX}{\mathbb{X}}
\newcommand{\YY}{\mathbb{Y}}
\newcommand{\FF}{\mathbb{F}}

\newcommand{\cO}{\mathcal{O}}
\newcommand{\cC}{\mathcal{C}}
\newcommand{\cN}{\mathcal{N}}

\newcommand{\fa}{\mathfrak{a}}
\newcommand{\fb}{\mathfrak{b}}
\newcommand{\fp}{\mathfrak{p}}
\newcommand{\fF}{\mathfrak{F}}
\newcommand{\fM}{\mathfrak{M}}
\newcommand{\fX}{\mathfrak{X}}
\newcommand{\fY}{\mathfrak{Y}}

\newcommand{\SL}{\mathrm{SL}}
\newcommand{\GL}{\mathrm{GL}}
\newcommand{\sign}{\mathrm{sign}}
\newcommand{\op}{\mathrm{op}}
\newcommand{\cl}{\mathrm{Cl}}
\newcommand{\am}{\mathrm{Am}}

\title[Congruences on the class numbers of $\QQ(\sqrt{\pm 2p})$ for $p\equiv3$ $(\text{mod }4)$ a prime]
{Congruences on the class numbers of $\QQ(\sqrt{\pm 2p})$\\ for $p\equiv3$ $(\text{mod }4)$ a prime}

\author{Jigu Kim and Yoshinori Mizuno}

\keywords{Class numbers, Quadratic fields, Hirzebruch sums}
\subjclass[2010]{Primary 11R29, Secondary 11A55, 11F20}

\maketitle

\noindent{\small {\bf Abstract.}
For a prime $p\equiv 3$ $(\text{mod }4)$, let $h(-8p)$ and $h(8p)$ be the class numbers of $\mathbb{Q}(\sqrt{-2p})$ and $\mathbb{Q}(\sqrt{2p})$, respectively. Let $\Psi(\xi)$ be the Hirzebruch sum of a quadratic irrational $\xi$. We show that $h(-8p)\equiv h(8p)\Big(\Psi(2\sqrt{2p})/3-\Psi(\frac{1+\sqrt{2p}}{2})/3\Big)$ $(\text{mod }16)$. Also, we show that 
$h(-8p)\equiv 2h(8p)\Psi(2\sqrt{2p})/3$ $(\text{mod }8)$ if $p\equiv 3$ $(\text{mod }8)$, and $h(-8p)\equiv \big(2h(8p)\Psi(2\sqrt{2p})/3\big)+4$ $(\text{mod }8)$ if $p\equiv 7$ $(\text{mod }8)$.}

\bigskip

\section{Introduction and results}\label{sec-1}
This paper is in conjunction with \cite{KM1}, and we briefly recall notations as follows. By $\Delta$ we denote a {\it quadratic discriminant}, i.e., a non-square integer which is congruent to $0$ or $1$ $(\text{mod }4)$. We write $\Delta=df^2$, where $d$ is the {\it fundamental discriminant}, i.e., the discriminant of the field $K=\mathbb{Q}(\sqrt{\Delta})$, and $f\in\NN$ is the {\it conductor}. Let $\omega_\Delta:=(\sigma_\Delta+\sqrt{\Delta})/2$, where $\sigma_\Delta=0$ if $\Delta$ is even and $\sigma_\Delta=1$ otherwise. The ring $\cO_{\Delta}=\ZZ[\omega_\Delta]$ (resp., $\cO_{d}=\ZZ[\omega_d]$) is called the {\it quadratic order} with conductor $f$ (resp., the {\it maximal order}) in $K$. We denote the {\it wide} (resp., {\it narrow}) {\it class number} of $\mathcal{O}_{\Delta}$ by $h(\Delta)$ (resp., $h^+(\Delta)$). When $\Delta<0$, $\omega(\Delta)$ denotes the number of the roots of unity in $\mathcal{O}_{\Delta}$, and in the case that $\Delta>0$, $\varepsilon_{\Delta}$ denotes the {\it fundamental unit} of $\mathcal{O}_{\Delta}$. For a quadratic irrational $\xi$ that has a continued fraction expansion $\xi=[\hat{v}_1,\cdots,\hat{v}_k;\overline{v_0,\cdots,v_{l-1}}]$, the {\it Hirzebruch sum} of $\xi$ is defined by
$$\Psi(\xi):=\left\{
\begin{array}{ll}\displaystyle\sum_{i=0}^{l-1}(-1)^{k+i}v_i &\text{if }l\text{ is even},\\
\displaystyle 0 &\text{if }l\text{ is odd}.
\end{array}\right.
$$

Zagier \cite{Zag75} obtained a beautiful formula between the class number of the maximal order in an imaginary quadratic field and the Hirzebruch sums of quadratic irrationals that are representatives of the class group of the maximal order in a real quadratic field; recently, the second named author and Kaneko \cite{KM20} generalized this formula to quadratic orders  (for detail, see Theorem \ref{thm-KM}). As an application, we summarize some congruences modulo 16 for quadratic class numbers: 
\begin{thm}\label{integrated-thm}
Assume that a pair $(d_1,d_2)$ of distinct negative fundamental discriminants is one of the following: 
\begin{enumerate}[(i)]
\item\label{sm-1} 
$(-4,-p)$ for some prime $p\equiv3$ $(\text{mod }4)$,
\item\label{sm-2} 
$(-4,-4p)$ for some prime $p\equiv1$ $(\text{mod }4)$,
\item\label{sm-3}  
$(-p_1,-p_2)$ for some distinct primes $p_1\equiv p_2\equiv 3$ $(\text{mod }4)$.
\end{enumerate}
Then we have 
\begin{equation}\label{integrated-congruences}
24\frac{h(d_1)h(d_2)}{\omega(d_1)\omega(d_2)} \equiv h(d_1d_2)\Psi(\omega_{d_1d_2})
\quad (\text{mod }16).
\end{equation}
\end{thm}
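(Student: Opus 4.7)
The plan is to apply the Kaneko--Mizuno generalization of Zagier's formula (Theorem \ref{thm-KM}) to the positive discriminant $\Delta := d_1 d_2$ and the generator $\omega_\Delta$. That formula expresses the quantity $h(\Delta)\Psi(\omega_\Delta)$ as a $\ZZ$-linear combination of normalized class numbers $h(\delta)/\omega(\delta)$ of imaginary quadratic orders $\cO_\delta$, where $\delta$ ranges over a finite set of negative discriminants attached to the genus characters of $\cO_\Delta$ (and to the divisors of the conductor, in the non-maximal situation). The principal term of this expansion is exactly $24\,h(d_1)h(d_2)/(\omega(d_1)\omega(d_2))$, corresponding to the natural genus decomposition $\Delta = d_1\cdot d_2$; the remaining task is to show that every other contribution is divisible by $16$.

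I would then treat the three cases separately. In (i) and (iii), $\Delta$ is a fundamental discriminant, so Theorem \ref{thm-KM} produces only the $(d_1,d_2)$-term together with a small number of auxiliary contributions indexed by the remaining genus characters. In case (ii), $\Delta = 16p$ is the discriminant of the non-maximal order of conductor $4$ in $\QQ(\sqrt p)$, so the formula additionally produces correction terms for every intermediate order of conductor $1$ or $2$; the class numbers of these intermediate orders are in turn expressible via the conductor--index formula in terms of $h(p)$, $\varepsilon_p$, and the Kronecker symbol $\left(\frac{2}{p}\right)$.

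The mod-$16$ reduction then relies on classical $2$-divisibility of imaginary quadratic class numbers coming from genus theory: $h(-p)$ is odd for $p\equiv 3\pmod 4$, while $h(-4p)\equiv 0\pmod 4$ with a finer congruence $\pmod 8$ depending on $p\pmod 8$; analogous refinements hold for the non-maximal order of discriminant $-16p$ and for $h(-4p_1p_2)$ when $p_1\equiv p_2\equiv 3\pmod 4$. Combined with the explicit coefficients from Theorem \ref{thm-KM}, these divisibilities force each auxiliary contribution to vanish modulo $16$, which leaves precisely \eqref{integrated-congruences}.

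The main obstacle is the combinatorial bookkeeping of case (ii): one has to enumerate every genus character and every intermediate order of $\cO_{16p}$, apply the conductor--index formula to rewrite their class numbers in terms of $h(p)$ and $h(-4p)$, and verify term-by-term that each correction is $\equiv 0 \pmod{16}$. Cases (i) and (iii) become largely mechanical once Theorem \ref{thm-KM} is unpacked, but the $2$-adic case analysis in (ii) requires a careful splitting on $p \pmod 8$ together with matching parity information for the Hirzebruch sum $\Psi(\omega_\Delta)$.
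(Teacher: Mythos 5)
You have inverted the direction of Theorem \ref{thm-KM}, and this inversion hides the entire difficulty. The identity \eqref{KM-equality-2} does not expand $h(\Delta)\Psi(\omega_\Delta)$ into a $\ZZ$-linear combination of imaginary class numbers $h(\delta)/\omega(\delta)$; it asserts the single exact equality
$$24\frac{h(d_1)h(d_2)}{\omega(d_1)\omega(d_2)}\,\theta(d_1,d_2,f)=\sum_{[\eta]_\sim\in\fX_\Delta}\chi^{(\Delta)}_{d_1,d_2}([\eta]_{\sim_+})\Psi([\eta]_{\sim_+}),$$
where the sum runs over the ideal classes of the \emph{real} order $\cO_{d_1d_2}$. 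There are no ``auxiliary contributions indexed by the remaining genus characters'' on the class-number side; and in case \eqref{sm-2} one applies the theorem with $f=1$ and $\Delta=16p$, so $\theta=1$ and no conductor--index corrections appear either --- the non-maximality of $\cO_{16p}$ enters only through the structure of its class group on the right-hand side. Consequently the $2$-divisibility facts you invoke are aimed at terms that do not exist in the formula (and one of them, $h(-4p)\equiv 0\pmod 4$ for $p\equiv 1\pmod 4$, is simply false: $h(-20)=2$).

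The actual content of \eqref{integrated-congruences} is that the class-group sum above is congruent to $h(d_1d_2)\Psi(\omega_{d_1d_2})$ modulo $16$, i.e.\ that each of the $h(d_1d_2)$ summands $\chi^{(\Delta)}_{d_1,d_2}([\eta]_{\sim_+})\Psi([\eta]_{\sim_+})$ agrees with $\Psi([\omega_\Delta]_{\sim_+})$ modulo $16$; in practice one pairs each class with its inverse $[\eta^{\op}]$, on which the summand takes the same value, and proves a congruence modulo $8$ for each pair. This is where all the work lies, and your proposal contains no mechanism for it. In \cite{CGPY15}, \cite{Miz21}, \cite{KM1} (and in Section \ref{sec-4} of this paper for the analogous statement, see Lemmas \ref{pf-lem-2-new} and \ref{pf-lem-3-new}) it is carried out by writing $\Psi(\eta)=n_{M_\eta}$ via Lemma \ref{HSDS-new} and then manipulating Dedekind sums through the reciprocity law \eqref{3.2.3-new} and the Jacobi-symbol formula \eqref{3.2.6-new}, together with explicit information about the fundamental unit and the ambiguous classes of the real order. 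Note also that the paper itself does not reprove Theorem \ref{integrated-thm} --- it is a summary of the cited results --- so a self-contained argument must reconstruct that Dedekind-sum analysis; the reduction you describe does not substitute for it.
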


Part \eqref{sm-1} is proved by Chua et al. \cite[Theorem 1.3]{CGPY15} using Zagier's formula \cite{Zag75}. Part \eqref{sm-2} is shown by the second named author \cite[Theorem 1]{Miz21}
due to the generalized formula \cite{KM20} (cf. Remark \ref{remark-formula}). For parts \eqref{sm-1} and \eqref{sm-2},
we see that $h(d_1)=1$ and  ${24}/{(\omega(d_1)\omega(d_2))}=3$ if $p>3$.
For part \eqref{sm-3}, we showed the congruence \eqref{integrated-congruences}  and provided a similar result for the real quadratic order with conductor $2$ in $\QQ(\sqrt{p_1p_2})$ \cite[Theorems 1.3 and 1.4]{KM1}; 
in this case, the congruence \eqref{integrated-congruences} reduces to a congruence modulo $8$ as ${24}/{(\omega(-d_1)\omega(-d_2))}$ is $6$ if $p_1$, $p_2>3$ and 2 otherwise.

Now we state the main results of this paper as follows. 
\begin{thm}\label{main-thm-new}
Let $p$ be a prime such that $p\equiv 3$ $(\text{mod }4)$. Then we have 
$$h(-8p)\equiv  
h(8p)\Big(\frac{\Psi(2\sqrt{2p})}{3}-\frac{\Psi(\frac{1+\sqrt{2p}}{2})}{3}\Big) \quad (\text{mod }16).$$
\end{thm}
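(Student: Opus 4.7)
The plan is to apply the generalized Zagier formula of Kaneko and the second-named author (Theorem \ref{thm-KM}, proved in \cite{KM20}) to the real quadratic order $\cO_{32p}$ of discriminant $32p$ and conductor $2$ in $\QQ(\sqrt{2p})$. The two Hirzebruch sums in the statement correspond to two distinguished quadratic irrationals of discriminant $32p$: $\omega_{32p}=2\sqrt{2p}$, representing the principal class of $\cl(32p)$, and $(1+\sqrt{2p})/2$, whose minimal polynomial $4x^2-4x+(1-2p)$ is primitive of discriminant $32p$.

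First, I would work out the structure of $\cl(32p)$. Since $p\equiv3$ $(\text{mod }4)$ forces $N(\varepsilon_{8p})=+1$, writing $\varepsilon_{8p}=a+b\sqrt{2p}$ with $a^2-2pb^2=1$ and reducing modulo $4$ shows that $b$ is even, so $\varepsilon_{8p}\in\cO_{32p}$ and the unit index $[\cO_{8p}^*:\cO_{32p}^*]$ equals $1$. The conductor formula (whose Euler factor at $2$ is trivial, since $2$ ramifies in $\cO_{8p}$) then gives $h(32p)=2h(8p)$ and the exact sequence $1\to\ZZ/2\to\cl(32p)\to\cl(8p)\to 1$. A direct computation shows that the ideal $\fa:=\ZZ+\tfrac{1+\sqrt{2p}}{2}\ZZ$ satisfies $\fa\cO_{8p}=\tfrac{1}{2}\cO_{8p}$ (using that $(2)$ and $(1+\sqrt{2p})$ are coprime in $\cO_{8p}$, since the norm $1-2p$ is odd), so $[\fa]$ is nontrivial in $\cl(32p)$ and generates the kernel of $\cl(32p)\twoheadrightarrow\cl(8p)$.

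Next, applying Theorem \ref{thm-KM} to $\cO_{32p}$ (possibly twisted by the nontrivial genus character on this kernel, analogous to the signed sums used for Theorem \ref{integrated-thm}(iii) in \cite{KM1}) should produce a congruence of the shape
$$3\,h(-8p)\equiv\sum_{[\fb]\in\cl(32p)}\pm\Psi(\xi_\fb)\quad(\text{mod }16),$$
where $\xi_\fb$ is the canonical representative of the class $[\fb]$. Partitioning $\cl(32p)$ into the pairs $\{[\fb_0],[\fa\cdot\fb_0]\}$ indexed by lifts of $\cl(8p)$ and applying the mod-$16$ congruences on Hirzebruch sums from \cite{KM1}, each such pair should contribute $\Psi(2\sqrt{2p})-\Psi((1+\sqrt{2p})/2)$ modulo $16$. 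Summing over all $h(8p)$ pairs then yields $h(8p)\bigl(\Psi(2\sqrt{2p})-\Psi((1+\sqrt{2p})/2)\bigr)$, and dividing by the constant $3$ produces the claimed congruence.

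The main obstacle is the mod-$16$ collapse of all $h(8p)$ pairs to the single principal-pair contribution $\Psi(2\sqrt{2p})-\Psi((1+\sqrt{2p})/2)$. This is the non-maximal-order analogue of the stability arguments for Hirzebruch sums in \cite{KM1}, and requires careful analysis of how $\Psi$ varies along the $\cl(32p)$-action, taking into account the ramification of $2$ and the conductor $f=2$. Correctly identifying the sign character in Theorem \ref{thm-KM} and pinning down the explicit normalization constant $3$ (compatible with $h(32p)=2h(8p)$) are the further technical points on which the argument rests.
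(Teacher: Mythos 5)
Your overall skeleton is the paper's: apply Theorem \ref{thm-KM} with $(d_1,d_2)=(-4,-8p)$ (so $\Delta=32p$, $f=1$ and the left side is exactly $3h(-8p)$ --- note the formula is an identity, not merely a congruence), identify the two ambiguous classes represented by $2\sqrt{2p}$ and $(1+\sqrt{2p})/2$, use $h(32p)=2h(8p)$ and the splitting $\cl(32p)=\am(32p)\oplus\phi_{32p}(\cl(8p))$, and compare the contribution of each coset pair $\{[\eta],[\eta\ast\omega_{32p}^{\am}]\}$ with that of the principal pair. The genus character $\chi^{(32p)}_{-4,-8p}$ is not an optional twist here; it is forced by the decomposition $32p=(-4)(-8p)$ and supplies the minus sign in $\Psi(2\sqrt{2p})-\Psi(\tfrac{1+\sqrt{2p}}{2})$, since $\chi^{(32p)}_{-4,-8p}(\omega_{32p}^{\am})=\chi_{-4}(1-2p)=-1$.

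The genuine gap is the sentence asserting that each pair contributes $\Psi(2\sqrt{2p})-\Psi(\tfrac{1+\sqrt{2p}}{2})$ \emph{modulo $16$}. That is not what the paper proves, and it cannot be imported from \cite{KM1}: the congruences there concern odd discriminants $p_1p_2$, whereas here the entire difficulty sits at the prime $2$. The paper's Lemma \ref{pf-lem-3-new} establishes only mod-$8$ stability of the paired contributions, and doing so is the technical heart of the argument --- an explicit Dedekind-sum computation with the matrices $M_\xi$, split into the cases $p\equiv3$ and $p\equiv7\pmod 8$ according to the parity of $r$ in $\varepsilon_{32p}=q+r\sqrt{8p}$ (Lemma \ref{Wil-lemma}) and using $2\varepsilon_{32p}=(X+Y\sqrt{2p})^2$ (Lemma \ref{ZY-thm-new}) to evaluate the relevant Jacobi symbols. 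The upgrade from mod $8$ to mod $16$ then comes from a mechanism your proposal omits entirely: since $\phi_{32p}(\cl(8p))$ has odd order $h(8p)$, its non-identity classes fall into inverse pairs $\{[\xi],[\xi^{\op}]\}$, and by Lemma \ref{pf-lem-2-new} the two members of such a pair contribute \emph{equally}, so the total deviation from the principal pair is $2\times(\text{something}\equiv 0\pmod 8)\equiv 0\pmod{16}$. A direct proof of mod-$16$ agreement of the pairs, as you propose, would via this same doubling yield the mod-$32$ congruence that the authors can only state as Conjecture \ref{conj-new}; so the step you defer to ``careful analysis'' is not a routine adaptation but is at least as hard as the open problem left at the end of the paper.
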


\begin{thm}\label{main-thm-new-new}
If $p$ is a prime such that $p\equiv 3$ $(\text{mod }8)$, then 
$$h(-8p)\equiv  2h(8p)\frac{\Psi(2\sqrt{2p})}{3} \quad (\text{mod }8).$$
If $p$ is a prime such that $p\equiv 7$ $(\text{mod }8)$, then 
$$h(-8p)\equiv  2h(8p)\frac{\Psi(2\sqrt{2p})}{3}+4 \quad (\text{mod }8).$$
\end{thm}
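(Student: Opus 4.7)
The plan is to deduce Theorem \ref{main-thm-new-new} from Theorem \ref{main-thm-new} by reducing the latter modulo $8$ and then pinning down the second Hirzebruch-sum term $h(8p)\Psi(\frac{1+\sqrt{2p}}{2})/3$ modulo $8$. Reducing Theorem \ref{main-thm-new} modulo $8$ yields
$$h(-8p)\equiv h(8p)\frac{\Psi(2\sqrt{2p})}{3}-h(8p)\frac{\Psi(\frac{1+\sqrt{2p}}{2})}{3}\pmod 8,$$
so, comparing with the desired congruence of Theorem \ref{main-thm-new-new}, it is enough to prove
$$h(8p)\cdot\frac{\Psi(\frac{1+\sqrt{2p}}{2})+\Psi(2\sqrt{2p})}{3}\equiv -\epsilon_p\pmod 8,$$
where $\epsilon_p=0$ if $p\equiv 3\pmod 8$ and $\epsilon_p=4$ if $p\equiv 7\pmod 8$.

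Both $2\sqrt{2p}$ and $\frac{1+\sqrt{2p}}{2}$ are roots of ambiguous primitive forms of discriminant $32p$, namely $x^2-8py^2$ and $4x^2-4xy+(1-2p)y^2$, associated with the order of conductor $2$ in $\QQ(\sqrt{2p})$. My approach to the auxiliary congruence is to apply the generalized Kaneko--Mizuno formula (Theorem \ref{thm-KM}) to the pair of negative fundamental discriminants $(d_1,d_2)=(-4,-8p)$, whose product $32p$ matches precisely this non-maximal real discriminant. The resulting identity expresses a scalar multiple of $h(-8p)$ as a full sum of Hirzebruch sums over the class group of discriminant $32p$, weighted by $h(32p)$. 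Pairing non-ambiguous classes with their inverses---whose Hirzebruch sums vanish modulo a suitable power of $2$ by the standard inversion identity for $\Psi$---the sum should collapse modulo $8$ to the two ambiguous contributions $\Psi(2\sqrt{2p})+\Psi(\frac{1+\sqrt{2p}}{2})$. Converting $h(32p)$ to $h(8p)$ via the conductor formula for quadratic orders, and tracking the result modulo $8$, should then deliver the auxiliary congruence with a correction term controlled by the $2$-adic valuation $v_2(h(-8p))$.

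The residue-class dependence $p\equiv 3$ versus $p\equiv 7\pmod 8$ enters precisely through $v_2(h(-8p))$, which Gauss's genus theory tells us picks up an extra factor of $2$ when $p\equiv 7\pmod 8$; this is what produces the extra $+4$ in the statement. In my view the main obstacle will be rigorously carrying out the cancellation of non-ambiguous Hirzebruch sums modulo $8$ in the conductor-$2$ setting, since continued fraction expansions of representatives of inverse non-maximal classes do not pair up as symmetrically as in the maximal-order case treated in \cite{CGPY15,Miz21,KM1}. A careful $2$-adic analysis of $\Psi(\frac{1+\sqrt{2p}}{2})$ keyed to the residue $p\pmod 8$, together with the unit-index correction $[\cO_{8p}^{\times}:\cO_{32p}^{\times}]$, will be the technical heart of the argument.
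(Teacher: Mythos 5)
Your opening reduction is exactly the paper's: Theorem \ref{main-thm-new-new} is deduced from Theorem \ref{main-thm-new} once one knows
$\Psi(2\sqrt{2p})+\Psi(\tfrac{1+\sqrt{2p}}{2})\equiv \epsilon_p \pmod 8$ with $\epsilon_p=0$ or $4$ according as $p\equiv 3$ or $7 \pmod 8$ (together with the fact, quoted from Lang, that this sum is also divisible by $3$, so that dividing by $3$ preserves the congruence mod $8$). The gap is in how you propose to prove this auxiliary congruence, and I see two concrete problems. First, applying Theorem \ref{thm-KM} to $(d_1,d_2)=(-4,-8p)$ weights each class by the genus character, and $\chi^{(32p)}_{-4,-8p}([\omega_{32p}^{\am}]_{\sim_+})=\chi_{-4}(1-2p)=-1$; so even if the non-ambiguous terms could be stripped away, what the formula isolates is the \emph{difference} $\Psi(\omega_{32p})-\Psi(\omega_{32p}^{\am})$, not the sum. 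You would only re-derive Theorem \ref{main-thm-new} modulo $8$, which is the congruence you already started from; no information about $\Psi(\omega_{32p})+\Psi(\omega_{32p}^{\am})$ comes out. Second, the inversion identity (Lemma \ref{pf-lem-2-new}) makes a class and its inverse contribute \emph{equally}, not oppositely: pairing the $2m$ non-principal classes of $\phi_{32p}(\mathfrak{X}_{8p})$ only extracts a factor of $2$, which is nowhere near a collapse modulo $8$. The real content needed is that \emph{every} class contributes the same residue modulo $8$ as the principal one (Lemma \ref{pf-lem-3-new}), and that is where all the work lies.

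The mechanism the paper actually uses for the auxiliary congruence is a direct Dedekind-sum computation, not genus theory. One writes $\Psi(\eta)=n_{M_\eta}$ (Lemma \ref{HSDS-new}), uses \eqref{3.2.6-new} to convert $12s(h,k)$ modulo a power of $2$ into Jacobi symbols, and evaluates those symbols via the factorization $2\varepsilon_{32p}=(X+Y\sqrt{2p})^2$ of Lemma \ref{ZY-thm-new}, which forces $\big(\tfrac{q\pm rm}{a}\big)=\big(\tfrac{2}{a}\big)$. The dichotomy $p\equiv 3$ versus $7\pmod 8$ enters through the parity of $r$ in $\varepsilon_{32p}=q+r\sqrt{8p}$ (Williams' theorem, Lemma \ref{Wil-lemma}) --- which changes which matrix entries serve as moduli of the Dedekind sums --- rather than through $v_2(h(-8p))$; the $+4$ appears from the explicit evaluation $\tfrac{(2p-1)^2-1}{2}\equiv 2p(p-1)\equiv 4\pmod 8$ for the representative $-1/\omega_{32p}^{\am}$. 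The $4$-rank statement you invoke is, in the paper, only a consistency check in a remark, not an ingredient of the proof. So while your reduction step is right, the proposed route to the key congruence would fail as described and needs to be replaced by this Dedekind-sum analysis.
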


\begin{rmk}
{\rm \phantom{1}\\ 
(1) Compared to Theorem \ref{integrated-thm}, Theorems \ref{main-thm-new} and \ref{main-thm-new-new} are in the case where $(d_1,d_2)=(-4,-8p)$ for a prime $p\equiv 3$ $(\text{mod }4)$. In this case, we have $h(32p)\equiv 2$ $(\text{mod }4)$ since $h(8p)$ is odd and $h(32p)=2h(8p)$ for a prime $p\equiv 3$ $(\text{mod }4)$ (cf. \cite[Theorem 5.6.13 and p. 328]{Hal13}). On the other hands, we have that 
$h(-8p)\equiv 2$ $(\text{mod }4)$ if $p\equiv 3$ $(\text{mod }8)$ and $h(-8p)\equiv 0$ $(\text{mod }4)$ if $p\equiv 7$ $(\text{mod }8)$; this is because the $2$-rank of the class group of $\QQ(\sqrt{-2p})$ is $1$ by genus theory, and 
the $4$-rank of that is equal to $1-\text{rank}_{\FF_2}R$, where $\FF_2$ is the finite field of order 2 and 
the {\it R\'{e}dei matrix} $R\in \text{Mat}_{2\times2}(\FF_2)$ is given through the Kronecker symbol $(\tfrac{\phantom{a}}{\phantom{a}})$ and 
$$R={\footnotesize 
\begin{pmatrix}\big(1-(\tfrac{2}{p})\big)/2&& \big(1-(\tfrac{2}{p})\big)/2 \\
\big(1-(\tfrac{-p}{2})\big)/2 &&\big(1-(\tfrac{-p}{2})\big)/2 \end{pmatrix}}
=\left\{\begin{array}{ll}
{\footnotesize 
\begin{pmatrix}1&1\\
1 &1 \end{pmatrix}} &\text{if }\, p\equiv3\,\,(\text{mod }8),\\
{\footnotesize 
\begin{pmatrix}0&0 \\
0 &0 \end{pmatrix}} &\text{if }\, p\equiv3\,\,(\text{mod }8) 
\end{array}\right.
$$
(cf. \cite{Red33}).\\ 
(2) For a prime $p>3$, the Hirzebruch sums $\Psi(2\sqrt{2p})$ 
and $\Psi(\frac{1+\sqrt{2p}}{2})$ are divisible by $3$ (cf. \cite[Satz 2]{Lan76}). For $p=3$, we see that 
$h(-24)=2$, $h(24)=1$, $\Psi(2\sqrt{6})=7$ and $\Psi((1+\sqrt{6})/2)=1$.\\
(3) There have been researches done on the class numbers of $\QQ(\sqrt{\pm 2p})$ modulo powers of 2 in the case of a prime $p\equiv1$ $(\text{mod }4)$; for instance, refer to \cite{KW82}, \cite{Wil81}, and \cite{WF85}.}
\end{rmk}

The paper is organized as follows.
In Section \ref{sec-2}, we recall the Kaneko-Mizuno-Zagier formula, Hirzebruch sums and Dedekind sums. 
In Section \ref{sec-3}, we review some facts on 
the ambiguous classes and the fundamental unit of $\mathcal{O}_{32p}$. 
We prove Theorems \ref{main-thm-new} and \ref{main-thm-new-new} in Section \ref{sec-4}.
\smallskip

\section{Preliminaries}\label{sec-2}
In this section, we recall the Kaneko-Mizuno-Zagier formula and review a calculation method of Hirzebruch sums through Dedekind sums; for more detail, we refer to \cite[Sections 2 and 3.1]{KM1}. 

Let $\Delta$ be a quadratic discriminant and 
$\mathbb{X}_\Delta:=\{\xi=\frac{b+\sqrt{\Delta}}{2a};\, a\neq 0,b,c,\in\mathbb{Z},\,\gcd(a,b,c)=1,\,b^2-4ac=\Delta\}$ the set of all {\it quadratic irrationals} of discriminant $\Delta$. 
For $A={\footnotesize \begin{pmatrix}\alpha& \beta \\
\gamma &\delta \end{pmatrix}}\in\mathrm{GL}_2(\mathbb{Z})$, let $A\xi:=(\alpha \xi+\beta)/(\gamma \xi + \delta)$.
Two quadratic irrationals $\xi$, $\eta\in\mathbb{X}_\Delta$ are {\it equivalent} 
(resp., {\it properly equivalent}) if there exists $M\in\mathrm{GL}_2(\mathbb{Z})$ (resp., $M\in\mathrm{SL}_2(\mathbb{Z})$) such that $\xi=M\eta$; by $\xi\sim \eta$ (resp., $\xi\sim_+ \eta$) we denote this relation.
Let $\mathfrak{X}_\Delta$ (resp., $\mathfrak{X}^+_\Delta$) the set of all equivalence (resp., proper equivalence) classes of $\mathbb{X}_\Delta$. For $\xi=(b+\sqrt{\Delta})/a\in\mathbb{X}_\Delta$, let $\xi':=(-b+\sqrt{\Delta})/(-a)$.
When $\Delta>0$, we say that $\xi\in\mathbb{X}_\Delta$ is {\it reduced} if $\xi>1$ and $-1<\xi'<0$; we denote by $\mathbb{X}^0_\Delta$ the set of all reduced quadratic irrationals of discriminant $\Delta$. For $\xi=\frac{b+\sqrt{\Delta}}{2a}\in\mathbb{X}^0_{\Delta}$, we have that $a$ and $b$ are positive as $\xi>\xi'$ and $\xi+\xi'>0$. We recall properties of quadratic irrationals and their continued fractions as follows: 
\begin{prop}\label{prop-cfrac}
Let $\xi=[u_0,u_1,u_2,\cdots]$ be a continued fraction of an irrational $\xi\in\mathbb{R}\setminus \mathbb{Q}$, and $\xi_n:=[u_n,u_{n+1},u_{n+2},\cdots]$ for $n\in\NN$. Let $\Delta$ be a positive quadratic discriminant. 
\begin{enumerate}[(i)]
\item\label{2.1.1} The sequence $(u_n)_{n\ge 0}$ is ultimately periodic if and only if $\xi$ is a quadratic irrational, 
and it is periodic if and only if $\xi$ is a reduced quadratic irrational. 
\item\label{minus-proper-euqiv}
For $\xi\in\mathbb{X}_\Delta$ and $n\in\NN$, $\xi_n$ is properly equivalent to $(-1)^n\xi$. 
\item\label{all-red-equiv}
Let $\xi=[\hat{v_1},\cdots,\hat{v_k};\overline{v_0,\cdots,v_{l-1}}]\in\mathbb{X}_{\Delta}$. 
The period length $l$ is odd if and only if $\mathcal{N}(\varepsilon_{\Delta})=-1$. The set of all reduced quadratic irrationals which are equivalent to $\xi$ is given by 
$\{\xi_k, \xi_{k+1}, \cdots, \xi_{k+l-1}\}$. 
\item\label{inverse-cfrac}  
If $\xi=[\overline{v_0,v_1,\cdots,v_{l-1}}]$, then $-\xi'^{-1}=[\overline{v_{l-1},v_{l-2},\cdots,v_0}]$.
\end{enumerate}
\end{prop}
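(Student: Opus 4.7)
The statement collects four classical facts about continued fractions of real quadratic irrationals, so the plan is to invoke the standard toolkit (Lagrange--Galois theory and the matrix formalism) and assemble the pieces.

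For part \eqref{2.1.1}, I would first establish Lagrange's theorem: if $\xi$ is ultimately periodic, writing $\xi_{k}=[\overline{u_k,\dots,u_{k+l-1}}]$ gives $\xi_k=(\alpha \xi_k+\beta)/(\gamma \xi_k+\delta)$ from the convergent matrix, which is quadratic over $\QQ$; conversely, for a quadratic irrational $\xi$ with discriminant $\Delta$, write $\xi_n=(P_n+\sqrt{\Delta})/(2Q_n)$ and show by induction that $P_n,Q_n\in\ZZ$ with $|P_n|,|Q_n|$ bounded in terms of $\Delta$, so only finitely many $\xi_n$ occur and the sequence must repeat. For the refinement to \emph{pure} periodicity (Galois's theorem), I would check that if $\xi$ is reduced then $\xi_1=1/(\xi-u_0)$ with $u_0=\lfloor\xi\rfloor\geq 1$ is again reduced (a direct inequality on $\xi_1'=1/(\xi'-u_0)\in(-1,0)$), so the reduced property propagates forward; since the reduced elements form a finite set and the shift is injective on reduced elements (from $\xi_{n+1}$ one recovers $u_n=\lfloor -1/\xi_{n+1}'\rfloor$ and hence $\xi_n$), the orbit is a cycle starting from $\xi$.

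Part \eqref{minus-proper-euqiv} is purely computational: the relation $\xi_n=u_n+1/\xi_{n+1}$ means $\xi_{n+1}=T_n\xi_n$ with $T_n=\left(\begin{smallmatrix}0&1\\1&-u_n\end{smallmatrix}\right)\in\GL_2(\ZZ)$ of determinant $-1$. Composing gives $\xi_n=A_n\xi$ for a matrix $A_n$ of determinant $(-1)^n$; if $n$ is even we are already done, and if $n$ is odd we conjugate $A_n$ by $\left(\begin{smallmatrix}-1&0\\0&1\end{smallmatrix}\right)$ (which negates the argument) to obtain an $\SL_2(\ZZ)$-matrix relating $\xi_n$ and $-\xi$. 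For part \eqref{all-red-equiv}, combining \eqref{2.1.1} and \eqref{minus-proper-euqiv}: since $\xi_k$ is reduced and purely periodic with period $l$, the elements $\xi_k,\dots,\xi_{k+l-1}$ are reduced and equivalent to $\xi$, and conversely an injectivity argument on the shift map restricted to reduced elements shows the orbit is exactly this set.

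The statement on the parity of $l$ is the most delicate piece: from \eqref{minus-proper-euqiv}, the matrix $A_l$ fixing $\xi_k=\xi_{k+l}$ lies in $\SL_2(\ZZ)$ iff $l$ is even, and one then identifies the generator of $\{A\in\GL_2(\ZZ)\mid A\xi_k=\xi_k\}$ with the fundamental automorph of the associated quadratic form, whose determinant is precisely $\mathcal{N}(\varepsilon_\Delta)$; this matches the parity of $l$ as asserted. I would cite the standard correspondence between reduced cycles and forms (e.g., Zagier's \emph{Zetafunktionen und quadratische K\"orper}) to avoid redoing the automorph computation from scratch. Finally, part \eqref{inverse-cfrac} follows from Galois's reversal identity: starting from $\xi=[\overline{v_0,\dots,v_{l-1}}]$ and iterating $\xi_n=u_n+1/\xi_{n+1}$ gives $\xi_{l-1}=v_{l-1}+1/\xi$, so taking Galois conjugates in the purely periodic regime yields $-1/\xi'=v_{l-1}+(-1/\xi_{l-1}')$, and inducting backwards (using $-1/\xi_{n}'>1$ at each reduced stage) unfolds the reversed continued fraction expansion. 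The main obstacle is the unit/automorph identification in part~\eqref{all-red-equiv}; everything else is a careful bookkeeping of the shift map and its matrix representatives.
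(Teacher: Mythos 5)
The paper gives no proof of this proposition at all: it is stated purely as a recollection of classical facts about continued fractions of real quadratic irrationals (Lagrange's and Galois's theorems, the reversal identity, and the period--fundamental-automorph correspondence), with standard references such as Halter-Koch's book in the background. Your outline reconstructs the standard proofs and is essentially sound, so there is no methodological divergence to compare — you simply supply arguments where the authors supply none. One step, however, is genuinely under-argued: for the converse inclusion in part (iii) you claim that ``an injectivity argument on the shift map restricted to reduced elements shows the orbit is exactly this set.'' Injectivity of the shift on the finite set of reduced quadratic irrationals of discriminant $\Delta$ only shows that this set decomposes into disjoint cycles; it does not rule out that a reduced $\eta$ equivalent to $\xi$ lies in a \emph{different} cycle. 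To close this you need Serret's theorem (two irrationals are $\GL_2(\ZZ)$-equivalent if and only if their continued fraction expansions have a common tail), or equivalently the cycle structure from the reduction theory of indefinite binary quadratic forms; either immediately places $\eta$ in the cycle of $\xi_k$. With that ingredient, together with the unit--automorph identification you already propose to cite for the parity of $l$, the outline is complete. Two minor slips that do not affect the argument: in part (ii) you should \emph{compose} $A_n$ with $\left(\begin{smallmatrix}-1&0\\0&1\end{smallmatrix}\right)$ rather than conjugate by it (conjugation preserves the determinant, so it cannot repair the sign); and in part (iv) the recursion reads $-1/\xi'=v_{l-1}+1/(-1/\xi_{l-1}')$, with the reciprocal, which is exactly what makes the reversed expansion unfold.
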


Let $\mathbb{X}^+_\Delta:=\{\xi=\frac{b+\sqrt{\Delta}}{2a}\in\XX_\Delta;\, a>0\}$ and $\mathbb{X}^-_\Delta:=\{\xi=\frac{b+\sqrt{\Delta}}{2a}\in\XX_\Delta;\, a<0\}$. For $\Delta>0$, we have $\fX^+_\Delta=\XX^+_\Delta/\sim_+$ since every quadratic irrational is properly equivalent to a reduced one by Proposition \ref{prop-cfrac}. For $\Delta<0$, $\fX^+_\Delta$ is a disjoint union of $\XX^+_\Delta/\sim_+$ and $\XX^-_\Delta/\sim_+$ because for $\xi\in\mathbb{X}^+_\Delta$ and $\eta\in\mathbb{X}^-_\Delta$, we have $\xi\not\sim_+\eta$ by a simple calculation. 

We recall a composition on $\mathbb{X}_\Delta^+$ and an operation on $\mathbb{X}_\Delta^+/\sim$ (or $\XX_{\Delta}^+/\sim_+$) as follows. For $\xi$, $\eta\in\mathbb{X}_{\Delta}^+$, we write $\xi=\frac{b_1+\sqrt{\Delta}}{2a_1}$ and $\eta=\frac{b_2+\sqrt{\Delta}}{2a_2}$. Let $e=\gcd(a_1,a_2,(b_1+b_2)/2)$ and $a_3:=(a_1a_2)/e^2$. Then there is a unique integer $b_3$ modulo $2a_3$ such that 
$$
\left\{
\begin{array}{rl}
b_3\equiv b_1 &(\text{mod }2\frac{a_1}{e}),\\
b_3\equiv b_2 &(\text{mod }2\frac{a_2}{e}),\\
\frac{b_1+b_2}{2e}b_3\equiv \frac{\Delta+b_1b_2}{2e} &(\text{mod }2a_3).
\end{array}
\right.
$$
We define a composition $\ast$ on $\mathbb{X}_\Delta^+$ by $\xi\ast\eta:=\frac{b_3+\sqrt{\Delta}}{2a_3}\in\mathbb{X}_\Delta^+$ (cf. \cite[Theorem 4.10]{Bue89}). We define an operation on $\XX_{\Delta}^+/\sim$ (resp., $\XX_{\Delta}^+/\sim_+$) by 
$[\xi]_\sim [\eta]_\sim:=[\xi\ast\eta]_\sim$ (resp., $[\xi]_{\sim_+} [\eta]_{\sim_+}:=[\xi\ast\eta]_{\sim_+}$);  
then $\XX_{\Delta}^+/\sim$ (resp., $\XX_{\Delta}^+/\sim_+$) is an abelian group, and $[\omega_\Delta]_\sim$ (resp., $[\omega_\Delta]_{\sim_+}$) is its identity element. For $\xi\in\mathbb{X}_\Delta^+$, we see that $-\xi'\in\mathbb{X}_\Delta^+$ and 
$\xi\ast (-\xi')=\omega_\Delta$. Hence, $[\xi]_\sim$ and $[-\xi']_\sim$ (resp., $[\xi]_{\sim_+}$ and $[-\xi']_{\sim_+}$) are inverses of each other in $\XX_{\Delta}^+/\sim$ (resp., $\XX_{\Delta}^+/\sim_+$). It is well-known that $\XX_{\Delta}^+/\sim$ (resp., $\XX_{\Delta}^+/\sim_+$) is isomorphic to the wide (resp., narrow) class group of $\cO_\Delta$ (cf. \cite[Theorems 6.4.2 and 6.4.5]{Hal13}). 

Now we introduce the genus character on $\XX^+_\Delta/\sim_+$ and the main results of \cite{KM20} as follows.
Let $\Delta=d_1d_2f^2$, where $f$ is a positive integer, $d_1$ and $d_2$ are distinct fundamental discriminants, and $d_1=q^\ast_{1}q^\ast_{2}\cdots q^\ast_{m}$ with prime fundamental
discriminants $q^\ast_{i}$. For $\xi=\frac{b+\sqrt{\Delta}}{2a}\in \mathbb{X}_\Delta$, we define 
$$\chi_{d_1,d_2}^{(\Delta)}(\xi):=
\prod_{i=1}^{m}
\chi^{(q^\ast_{i})}(\xi),$$
where 
$$\chi^{(q^\ast_{i})}(\xi):=\left\{
\begin{array}{ll}
\chi_{q^\ast_{i}}(a) &\text{ if }\, \gcd(a,q^\ast_i)=1,\\
\chi_{q^\ast_{i}}(c) &\text{ if }\, \gcd(c,q^\ast_i)=1,
\end{array}\right.$$
with the Kronecker symbol $\chi_{q^\ast_{i}}:=\big(\frac{q^\ast_{i}}{\cdot}\big)$ and $c=(b^2-\Delta)/(4a)\in\ZZ$. 
We define the genus character $\chi_{d_1,d_2}^{(\Delta)}:\XX^+_\Delta/\sim_+ \,\to \{\pm 1\}$ by 
$$\chi_{d_1,d_2}^{(\Delta)}([\xi]_{\sim_+}):=\chi_{d_1,d_2}^{(\Delta)}(\xi),$$ 
and it is a well-defined homomorphism. Also, we can understand that $\Psi([\xi]_{\sim_+}):=\Psi(\xi)$ due to Proposition \ref{prop-cfrac} \eqref{minus-proper-euqiv}. 
\begin{thm}[{\cite{KM20} and \cite[Section 2]{KM1}}] \label{thm-KM}
Let $\Delta=d_1d_2f^2$, where $d_1$ and $d_2$ are distinct negative fundamental discriminants and $f\in\NN$. 
Let $m_p$ be a nonnegative integer such that $p^{m_p}$ is the highest power of a prime $p$ dividing $f$, and let 
$$\displaystyle
\theta(d_1,d_2,f):=\prod_{\substack{p\mid f\\p:\text{ prime}}}
\frac{(1-\chi_{d_1}(p))(1-\chi_{d_2}(p)) - p^{m_p-1}(p-\chi_{d_1}(p))(p-\chi_{d_2}(p))}{
1-p}.$$ 
Then we have $\mathcal{N}(\varepsilon_\Delta)=1$ and 
\begin{eqnarray}\label{KM-equality-2}
24\frac{h(d_1)h(d_2)}{\omega(d_1)\omega(d_2)} \theta(d_1,d_2,f)
=\sum_{[\eta]_\sim\in\mathfrak{X}_\Delta}\chi_{d_1,d_2}^{(\Delta)}([\eta]_{\sim_+})\Psi([\eta]_{\sim_+}).
\end{eqnarray}
\end{thm}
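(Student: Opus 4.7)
\bigskip

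The plan is to evaluate the Dirichlet $L$-value $L(0,\chi)$ attached to the genus character $\chi := \chi_{d_1,d_2}^{(\Delta)}$, regarded as a character of the narrow class group $\XX_\Delta^+/\sim_+$, in two different ways and equate the results. This is Zagier's strategy from \cite{Zag75} (the case $f=1$) in the form extended to orders of arbitrary conductor in \cite{KM20}.

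First I would dispose of the assertion $\mathcal{N}(\varepsilon_\Delta)=1$. Since $\cO_\Delta\subset\cO_{d_1d_2}$, the unit $\varepsilon_\Delta$ is a positive integer power of $\varepsilon_{d_1d_2}$, so it suffices to establish the norm identity for the maximal order. Because $d_1$ and $d_2$ are both negative fundamental discriminants, every prime discriminant factor of $d_1d_2$ is of a type ($-4$, $\pm 8$, or an odd prime $\equiv 3\pmod 4$) at which $-1$ fails to be a local norm from $\mathbb{Q}(\sqrt{d_1d_2})$. Hasse's local-global principle for norms then rules out $\mathcal{N}(\varepsilon_{d_1d_2})=-1$.

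For the main identity, the central analytic input is the genus-theoretic factorization
\[
L(s,\chi)\;=\;L(s,\chi_{d_1})\,L(s,\chi_{d_2})\cdot\prod_{p\mid f}E_p(s)
\]
of $L(s,\chi)$ as a Hecke $L$-series of $\cO_\Delta$, which follows from identifying $\chi$ with a ring class character under class field theory. A direct computation of $E_p(0)$ in terms of $\chi_{d_1}(p)$, $\chi_{d_2}(p)$, and the $p$-adic valuation $m_p$ of $f$ recovers exactly the Euler-factor product defining $\theta(d_1,d_2,f)$; combined with the analytic class number formula $L(0,\chi_{d_i})=-2h(d_i)/\omega(d_i)$ for the imaginary quadratic fields, this yields the left-hand side of \eqref{KM-equality-2} up to a universal constant.

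The right-hand side comes from a Meyer-type formula generalized to the order $\cO_\Delta$: the partial zeta value $\zeta_\Delta(0,[\eta]_{\sim_+})$ for a narrow class represented by a reduced quadratic irrational $\eta$ is proportional to $\Psi(\eta)$ (Meyer's $1/12$-constant), so summing $\chi\cdot\Psi$ over narrow classes reconstitutes $L(0,\chi)$. Passage from $\XX_\Delta^+/\sim_+$ to the wide index set $\fX_\Delta$ is handled by the involution $[\eta]_{\sim_+}\leftrightarrow[-\eta']_{\sim_+}$, which pairs each wide class with its two distinct narrow lifts (distinct precisely because $\mathcal{N}(\varepsilon_\Delta)=1$), and by Proposition \ref{prop-cfrac}\eqref{inverse-cfrac} to track $\Psi$ under this involution. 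The hardest step will be the Meyer formula for the non-maximal order: one must control the continued-fraction expansion of reduced representatives in $\XX_\Delta$ relative to the expansion in $\XX_{d_1d_2}$, and it is precisely this discrepancy at primes $p\mid f$ that produces the correction $\theta(d_1,d_2,f)$ on the analytic side. Matching constants to obtain the overall coefficient $24$ in \eqref{KM-equality-2} then requires careful accounting of Meyer's $1/12$, the $L$-value normalizations, and the narrow-to-wide factor of $2$.
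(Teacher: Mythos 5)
This theorem is not proved in the paper at all: it is quoted from \cite{KM20} (see also Section 2 of \cite{KM1}), so the only benchmark is the strategy of those references. Your outline does track that strategy --- evaluate $L(0,\chi_{d_1,d_2}^{(\Delta)})$ once via Kronecker's factorization $L(s,\chi_{d_1})L(s,\chi_{d_2})\prod_{p\mid f}E_p(s)$ together with $L(0,\chi_{d_i})$ in terms of $h(d_i)/\omega(d_i)$ (which produces $\theta(d_1,d_2,f)$ and the left-hand side), and once via a Meyer--Zagier evaluation of partial zeta values at $s=0$ by continued fractions (which produces the Hirzebruch sums on the right). But everything you yourself flag as the hard part --- the partial-zeta/continued-fraction computation for the non-maximal order $\cO_\Delta$, the passage between narrow and wide classes, and the matching of constants --- is precisely the content of \cite{KM20} and is left unexecuted, so as it stands this is a plan rather than a proof. (Minor point: $L(0,\chi_{d_i})=+2h(d_i)/\omega(d_i)$ for $d_i<0$, not $-2h(d_i)/\omega(d_i)$; and the second narrow class inside a wide class $[\eta]_\sim$ is $[-\eta]_{\sim_+}$, not the inverse class $[-\eta']_{\sim_+}$, cf.\ Remark \ref{remark-formula}(1).)

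The one step you do argue in detail, $\cN(\varepsilon_\Delta)=1$, is broken as written. You reduce to ``the maximal order'' $\cO_{d_1d_2}$ and invoke the Hasse norm principle, claiming every prime discriminant factor of $d_1d_2$ locally obstructs $-1$. But $d_1$ and $d_2$ need not be coprime, so $d_1d_2$ need not be a fundamental discriminant, and the genuine maximal order can contain a unit of norm $-1$: for $(d_1,d_2)=(-4,-20)$ one has $\Delta=80f^2$, the field is $\QQ(\sqrt{5})$, and $\cN\bigl((1+\sqrt{5})/2\bigr)=-1$, so neither the field-level Hasse argument nor your claimed list of prime discriminant types applies. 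The correct argument must be run in the order itself: a unit of norm $-1$ in $\cO_\Delta$ gives integers with $x^2-\Delta y^2=-4$, which forces every odd prime divisor of $\Delta$ to be $\equiv 1 \ (\text{mod }4)$; since $d_1,d_2<0$, this forces $4\mid d_1$ and $4\mid d_2$, hence $16\mid\Delta$, and then $x^2\equiv -4\ (\text{mod }16)$ is impossible. With that repair the norm assertion is fine, but the main identity \eqref{KM-equality-2} still rests entirely on the cited results rather than on anything established in your proposal.
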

\begin{rmk}\label{remark-formula}
{\rm \phantom{1}\\ 
(1) For $\eta\in\mathbb{X}_\Delta$ such that $\mathcal{N}(\varepsilon_\Delta)=1$, $[\eta]_\sim$ is a disjoint union of 
$[\eta]_{\sim_+}$ and $[-\eta]_{\sim_+}$, and 
$$\chi_{d_1,d_2}^{(\Delta)}([-\eta]_{\sim_+})\Psi([-\eta]_{\sim_+})
=\big(-\chi_{d_1,d_2}^{(\Delta)}([\eta]_{\sim_+})\big)\big(-\Psi([\eta]_{\sim_+})\big)
=\chi_{d_1,d_2}^{(\Delta)}([\eta]_{\sim_+})\Psi([\eta]_{\sim_+}).
$$ 
(2) Zagier showed the equation \eqref{KM-equality-2} when $d_1$ and $d_2$ are relatively prime negative fundamental discriminants and $\Delta$ is a positive fundamental discriminant \cite[(23)]{Zag75}.}
\end{rmk}

We recall a relationship between Hirzebruch sums and Dedekind sums as follows. For $y\in\RR$, let 
$$((y)):=\left\{\begin{array}{ll}
y-\lfloor y \rfloor -\tfrac{1}{2} &\text{if }\, y\not\in\mathbb{Z},\\
0 &\text{if }\, y\in\mathbb{Z},
\end{array}\right.
$$
where $\lfloor y \rfloor$ is the greatest integer not greater than $y$. For $h\in\ZZ$ and $k\in\NN$ such that $\gcd(h,k)=1$, we define the {\it Dedekind sum} $s(h,k)$ by 
$$s(h,k):=\sum_{m=1}^{k}\Big(\Big(\frac{hm}{k}\Big)\Big) \Big(\Big(\frac{m}{k}\Big)\Big).$$
Then it is well-known that 
\begin{equation}\label{3.2.5-new} 
6ks(h,k)\in\ZZ,
\end{equation}
\begin{equation}\label{3.2.4-new}
s(h,k)=s(h',k) \,\,\text{ if }\, hh'\equiv 1\,\,(\text{mod }k),
\end{equation}
\begin{equation}\label{3.2.1-new} 
s(-h,k)=-s(h,k),
\end{equation}
\begin{equation}\label{3.2.3-new}
s(h,k)+s(k,h)=\frac{h^2+k^2+1}{12hk}-\frac{1}{4} 
\,\,\text{ if }\, h\in\NN,
\end{equation}
and for the {\it Jacobi symbol} $\Big(\frac{\phantom{a}}{\phantom{a}}\Big)$, 
\begin{equation}\label{3.2.6-new} 
\Big(\frac{h}{k}\Big)=(-1)^{\frac{1}{2}\left(\frac{k-1}{2}-6ks(h,k)\right)}\,\,\text{ if }\, k \,\text{ is odd}.
\end{equation}
For a matrix $M={\footnotesize \begin{pmatrix}x& y \\ z &w\end{pmatrix}}\in\SL_2(\ZZ)$ such that $z\neq0$, we define 
$$n_M:=\frac{x+w}{z}-\sign(z)(3+12s(w,|z|)),$$
where $\sign(z):=z/|z|$.
\begin{lem}\label{matrix-lem}
Let $M={\footnotesize \begin{pmatrix}x&y\\z&w\end{pmatrix}}\in\SL_2(\ZZ)$ such that the trace of $M>2$. Then $n_M=n_{M'}$ if one of the following holds:
\begin{enumerate}[(i)]
\item\label{matrix-lem-1} The matrix $M'$ is a $\SL_2(\ZZ)$-conjugate of $M$, 
\item\label{matrix-lem-2} The matrix $M'={\footnotesize \begin{pmatrix}3&-1\\1&0\end{pmatrix}} M$ and $x>0$.
\end{enumerate}
\end{lem}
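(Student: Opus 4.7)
The plan is to verify each part by direct manipulation of the defining expression
$n_M = \frac{x+w}{z} - \sign(z)\bigl(3 + 12\,s(w, |z|)\bigr)$
using the four Dedekind-sum identities \eqref{3.2.5-new}--\eqref{3.2.3-new}. Note first that the trace hypothesis forces $z \neq 0$ (else $xw = 1$ would give trace $\pm 2$), and for the same reason $y \neq 0$ in part (i), where $SMS^{-1}$ has $-y$ as its lower-left entry; hence all Dedekind sums below are well defined.

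For part (i), since $\SL_2(\ZZ)$ is generated by $S = \bigl(\begin{smallmatrix}0&-1\\1&0\end{smallmatrix}\bigr)$ and $T = \bigl(\begin{smallmatrix}1&1\\0&1\end{smallmatrix}\bigr)$, it suffices to check $n_{PMP^{-1}} = n_M$ for $P \in \{S, T\}$. The case $P = T$ is immediate: $TMT^{-1}$ has the same trace and the same $z$, with new $(2,2)$-entry $w - z$, so the claim follows from the periodicity $s(w - z, |z|) = s(w, |z|)$ of the Dedekind sum in its first argument. The case $P = S$ sends $(x, y, z, w)$ to $(w, -z, -y, x)$; after clearing denominators and using $\det M = 1$, the required equality reduces to the reciprocity law \eqref{3.2.3-new} for the coprime pair $(|y|, |z|)$ together with the inverse-property \eqref{3.2.4-new} (since $xw \equiv 1 \pmod{|y|}$ and $xw \equiv 1 \pmod{|z|}$) and the sign-flip \eqref{3.2.1-new}. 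This is essentially the classical conjugation-invariance of the Rademacher $\Phi$-function on hyperbolic $\SL_2(\ZZ)$-conjugacy classes.

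For part (ii), the matrix $M' = \bigl(\begin{smallmatrix}3&-1\\1&0\end{smallmatrix}\bigr) M$ has entries $(3x - z,\,3y - w,\,x,\,y)$, and the hypothesis $x > 0$ plugged into the definition gives immediately
\[
n_{M'} \;=\; \frac{y - z}{x} - 12\,s(y, x).
\]
Using $xw - yz = 1$, a short calculation yields $\frac{x+w}{z} - \frac{y-z}{x} = \sign(z)\cdot\frac{x^2 + z^2 + 1}{x|z|}$; applying reciprocity \eqref{3.2.3-new} to the coprime pair $(x, |z|)$ rewrites this as $12\sign(z)\bigl(s(x, |z|) + s(|z|, x)\bigr) + 3\sign(z)$. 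Finally, \eqref{3.2.4-new} applied to $xw \equiv 1 \pmod{|z|}$ gives $s(x, |z|) = s(w, |z|)$, while \eqref{3.2.4-new} and \eqref{3.2.1-new} applied to $(-z)y \equiv 1 \pmod{x}$ yield $s(|z|, x) = -\sign(z)\,s(y, x)$; substituting these identities into $n_M - n_{M'}$ produces $0$.

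The main obstacle will be the sign bookkeeping in the $S$-conjugation case of part (i): $S$-conjugation can flip the signs of both $y$ and $z$, so one must track $\sign(y)$, $\sign(z)$ and the signs arising in reciprocity via a short case split to match the $\sign$-factors on both sides. Part (ii) is cleaner because the hypothesis $x > 0$ fixes $\sign(z') = +1$ and avoids analogous case-work.
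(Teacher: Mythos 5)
Your strategy for part (i) is the same as the paper's: reduce to the generators $T$ and $S$, dispose of $T$ by periodicity of $s(\,\cdot\,,|z|)$ in its first argument, and treat $S$ by combining the inverse property \eqref{3.2.4-new}, the sign flip \eqref{3.2.1-new} and reciprocity \eqref{3.2.3-new}. But the specific reduction you name for the $S$-step would fail as written: reciprocity cannot be applied to the pair $(|y|,|z|)$, because $y$ and $z$ need not be coprime --- for $M=\left(\begin{smallmatrix}3&2\\4&3\end{smallmatrix}\right)$ one has $\gcd(y,z)=2$ --- so $s(|y|,|z|)$ and $s(|z|,|y|)$ are not even defined, and in any case neither is produced from $s(w,|z|)$ or $s(x,|y|)$ by the inverse property. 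The pairs that actually occur are $(|x|,|z|)$ and $(|x|,|y|)$, which are coprime because $\det M=1$: one writes $s(w,|z|)=s(x,|z|)$, applies reciprocity to each of these two pairs, and links the resulting terms via $s(z,|x|)=-s(y,|x|)$, which follows from $(-z)y\equiv 1 \pmod{x}$ together with \eqref{3.2.1-new}. This is exactly what the paper does, and it forces a separate (trivial) treatment of the case $x=0$, where $y=-z\in\{\pm1\}$ and all the Dedekind sums involved vanish. Your sketch identifies the right tools but the wrong reciprocity pair and omits the $x=0$ case, so the $S$-conjugation step needs to be redone along these lines.

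Part (ii), by contrast, you prove correctly and completely: $n_{M'}=\frac{y-z}{x}-12s(y,x)$, the identity $\frac{x+w}{z}-\frac{y-z}{x}=\sign(z)\,\frac{x^2+z^2+1}{x|z|}$, reciprocity for the coprime pair $(x,|z|)$, and the substitutions $s(x,|z|)=s(w,|z|)$ and $s(|z|,x)=-\sign(z)\,s(y,x)$ do yield $n_M-n_{M'}=0$ after the two cancellations. This is more than the paper supplies, since for this part the paper simply cites \cite[Lemma 8]{KM20}; your computation makes the lemma self-contained.
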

\begin{proof}
\eqref{matrix-lem-1} Note that $yz\neq 0$ and the trace is invariant under conjugation. 
Since $\SL_2(\ZZ)$ is generated by $T:={\footnotesize \begin{pmatrix}1&1\\0&1\end{pmatrix}}$ and $S:={\footnotesize \begin{pmatrix}0&-1\\1&0\end{pmatrix}}$, it suffices to show that 
$n_M=n_{TMT^{-1}}$ and $n_M=n_{SMS^{-1}}$. We have $TMT^{-1}={\footnotesize
\begin{pmatrix}x+z&-x+y-z+w\\z&-z+w\end{pmatrix}}$; thus $n_{TMT^{-1}}=\frac{x+w}{z}-\sign(z)(3+12s(-z+w,|z|))=n_M$. 
We have $SMS^{-1}={\footnotesize \begin{pmatrix}w&-z \\ -y&x\end{pmatrix}}$. If $x=0$, then $y=-z\in\{\pm1\}$; hence, we get $n_M=n_{SMS^{-1}}$. Now we assume that $x\neq0$. By \eqref{3.2.4-new}, \eqref{3.2.1-new} and \eqref{3.2.3-new}, we have that
$s(w,|z|)=s(x,|z|)=\sign(x)s(|x|,|z|)=\sign(x)\big(-s(|z|,|x|)+\frac{x^2+z^2+1}{12|xz|}-\frac{1}{4}\big)$ and 
$n_{M}=\frac{x+w}{z}-\frac{x^2+z^2+1}{xz}+12\sign(x)s(z,|x|)-3\big(1-\sign(x)\big)\sign(z)$. 
Similarly, we have that $s(x,|y|)=\sign(x)s(|x|,|y|)=\sign(x)\big(-s(|y|,|x|)+\frac{x^2+y^2+1}{|xy|}-\frac{1}{4}\big)$ and 
$n_{SMS^{-1}}=-\frac{x+w}{y}+\frac{x^2+y^2+1}{xy}+3\big(1-\sign(x)\big)\sign(y)$. 
If $x<0$, then $w>0$ and $yz=xw-1<0$; so, we have $-\big(1-\sign(x)\big)\sign(z)=\big(1-\sign(x)\big)\sign(y)$ for $x\neq0$. 
Therefore, we get $n_{M}-n_{SMS^{-1}}=\frac{(y+z)(xw-yz-1)}{xyz}=0$.\\ 
\eqref{matrix-lem-2} We refer to \cite[Lemma 8]{KM20}.
\end{proof}
\smallskip

For $\Delta>0$ and $\xi\in\mathbb{X}_\Delta$, we define $n(\xi):=n_{M_\xi}$, where $M_\xi$ is the unique integral matrix
$M_\xi:={\footnotesize \begin{pmatrix}x& y \\ z &w\end{pmatrix}}$ such that $\varepsilon_\Delta \xi=x\xi+y$ 
and $\varepsilon_\Delta=z\xi+w$. For $\xi=\frac{b+\sqrt{\Delta}}{2a}\in\mathbb{X}_\Delta$, $M_\xi$ is given by 
\begin{equation}\label{omega-2}
M_\xi=
{\footnotesize \begin{pmatrix} 1 & (b-\sigma_\Delta)/2 \\ 0 & a\end{pmatrix}}
{\footnotesize \begin{pmatrix} q+r\sigma_\Delta & r(\Delta-\sigma_\Delta)/4 \\ r & q\end{pmatrix}}
{\footnotesize \begin{pmatrix} 1 & (b-\sigma_\Delta)/2 \\ 0 & a\end{pmatrix}}^{-1}, 
\end{equation}
where $q$ and $r$ are positive integers such that $\varepsilon_\Delta=q+r\omega_\Delta$. 
\begin{lem}\label{HSDS-new}
Let $\Delta$ be a positive quadratic discriminant such that $\cN(\varepsilon_\Delta)=1$. Then for any $\eta\in\mathbb{X}_\Delta$, $\Psi(\eta)=n(\eta)$.
\end{lem}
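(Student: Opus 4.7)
The plan is to use invariance to reduce to a reduced $\eta$, identify $M_\eta$ as a product of simple matrices, and then evaluate $n_{M_\eta}$ by induction using Lemma \ref{matrix-lem}.

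First I would verify that both $\Psi(\eta)$ and $n(\eta)$ depend only on the proper equivalence class $[\eta]_{\sim_+}$. For $\Psi$, any two reduced representatives of $[\eta]_{\sim_+}$ differ by an even cyclic shift of the period (Proposition \ref{prop-cfrac}\eqref{all-red-equiv}); since $\mathcal{N}(\varepsilon_\Delta)=1$ forces the period $l$ to be even, the alternating sum with the $(-1)^{k+i}$ weights is invariant under such shifts. For $n$, a direct manipulation of the defining relations $\varepsilon_\Delta\eta = x\eta+y$, $\varepsilon_\Delta = z\eta+w$ shows $M_{A\eta}=A M_\eta A^{-1}$ for $A\in\SL_2(\ZZ)$, so Lemma \ref{matrix-lem}\eqref{matrix-lem-1} yields $n(A\eta)=n(\eta)$. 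Hence we may assume $\eta$ is reduced and purely periodic, $\eta=[\overline{v_0,\ldots,v_{l-1}}]$ with $l$ even.

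Next I would switch to Hirzebruch's negative continued fraction $\eta=[[b_1,\ldots,b_r]]$ (so $\eta=b_1-1/(b_2-1/(\cdots))$). Iterating this relation around the period produces the factorization
$$M_\eta \;=\; \prod_{j=1}^{r}\begin{pmatrix} b_j & -1 \\ 1 & 0 \end{pmatrix} \;=\; \prod_{j=1}^{r} T^{b_j-3}\begin{pmatrix} 3 & -1 \\ 1 & 0 \end{pmatrix}, \qquad T:=\begin{pmatrix} 1 & 1 \\ 0 & 1 \end{pmatrix},$$
which agrees with \eqref{omega-2} because both sides fix $\eta$ as Möbius transformations and share the same positive trace $\varepsilon_\Delta+\varepsilon_\Delta^{-1}>2$. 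The classical Hirzebruch--Zagier conversion identity $\sum_{i=0}^{l-1}(-1)^i v_i=\sum_{j=1}^{r}(b_j-3)$ then reduces the lemma to proving the matrix identity $n_{M_\eta}=\sum_{j=1}^{r}(b_j-3)$.

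I would prove this last identity by induction on $r$. The base case $r=1$ is immediate: for $M=\begin{pmatrix} b_1 & -1 \\ 1 & 0 \end{pmatrix}$, the definition of $n_M$ together with $s(0,1)=0$ gives $n_M=b_1-3$. For the inductive step, the elementary formula $n_{T^k M}=n_M+k$ (immediate from the definition of $n$ since $T^k M$ has the same bottom row as $M$ and top-left increased by $kz$) combined with Lemma \ref{matrix-lem}\eqref{matrix-lem-2} (which cancels a leading $\begin{pmatrix} 3 & -1 \\ 1 & 0 \end{pmatrix}$ without changing $n$) strips off the leftmost block $T^{b_1-3}\begin{pmatrix} 3 & -1 \\ 1 & 0 \end{pmatrix}$ at the cost of $b_1-3$ and reduces to the case $r-1$. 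The main obstacle is verifying the positivity hypothesis $x>0$ required by Lemma \ref{matrix-lem}\eqref{matrix-lem-2} at every inductive step; I would handle this by identifying each partial product $\prod_{j=k}^{r} T^{b_j-3}\begin{pmatrix} 3 & -1 \\ 1 & 0 \end{pmatrix}$ with the matrix attached via \eqref{omega-2} to a reduced quadratic irrational obtained by shifting the negative-CF period, whose positive $(1,1)$-entry follows from reducedness.
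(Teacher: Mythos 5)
Your opening reduction is exactly the paper's proof: choose a reduced $\xi\in\XX^0_\Delta$ properly equivalent to $\eta$ via Proposition \ref{prop-cfrac}, observe $M_{A\xi}=AM_\xi A^{-1}$, and invoke Lemma \ref{matrix-lem}\eqref{matrix-lem-1}. The paper then finishes by \emph{citing} $\Psi(\xi)=n(\xi)$ for reduced $\xi$ from \cite[Lemma 3.6]{KM1}; the only genuinely new content in your proposal is your attempt to reprove that cited lemma via negative continued fractions, and that part has concrete gaps.

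First, an irrational that is reduced for the regular continued fraction need not have a purely periodic negative expansion: for $\Delta=12$, $\xi=(1+\sqrt3)/2=[\overline{1,2}]$ is reduced, but its negative expansion is $[[2,\overline{2,3}]]$. So the one-period product $P=\prod_j\left(\begin{smallmatrix}b_j&-1\\1&0\end{smallmatrix}\right)$ fixes a shifted irrational rather than $\eta$, and is only $\SL_2(\ZZ)$-conjugate to $M_\eta$, not equal to it (repairable by another application of Lemma \ref{matrix-lem}\eqref{matrix-lem-1}, but not as written). Second, ``fixes $\eta$ and has the same positive trace'' only gives $P\in\{M_\eta,M_\eta^{-1}\}$, and the underlying assertion that the trace of one period is exactly $\varepsilon_\Delta+\varepsilon_\Delta^{-1}$ (one period yields the \emph{fundamental} automorph, not a power) is itself a nontrivial fact you assume. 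Third, and most seriously, the induction does not go through as described: after stripping the leftmost block, the remaining matrix $\prod_{j\ge 2}\left(\begin{smallmatrix}b_j&-1\\1&0\end{smallmatrix}\right)$ is a \emph{partial} product, which is not a cyclic shift of the full product and hence is not $M_{\xi'}$ for any reduced $\xi'$ of discriminant $\Delta$; its trace can fall to $2$ or below (e.g.\ for $(3+\sqrt3)/2=[[\overline{3,2}]]$ one step leaves $\left(\begin{smallmatrix}2&-1\\1&0\end{smallmatrix}\right)$, of trace $2$), so the hypothesis ``trace $>2$'' of Lemma \ref{matrix-lem} is unavailable at intermediate steps, and your proposed source of the positivity $x>0$ evaporates. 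These are precisely the technicalities that \cite[Lemma 3.6]{KM1} and \cite[Lemma 8]{KM20} are set up to handle; for the present lemma the intended route is simply to quote that result for reduced $\xi$, as the authors do.
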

\begin{proof} 
By Proposition \ref{prop-cfrac} there exist $A\in\SL_2(\ZZ)$ and $\xi\in\mathbb{X}_\Delta^0$ such that $\eta=A\xi$.  
One can easily check that $M_{\eta}=AM_{\xi}A^{-1}$. We have $\Psi(\eta)=\Psi(\xi)=n(\xi)$ by \cite[Lemma 3.6]{KM1}.  
Also, we get $n(\eta)=n_{M_\eta}=n_{AM_{\xi}A^{-1}}=n_{M_{\xi}}=n(\xi)$ by Lemma \ref{matrix-lem} \eqref{matrix-lem-1}. Therefore, $\Psi(\eta)=n(\eta)$. 
\end{proof}
\smallskip

\section{The quadratic order $\cO_{32p}$ for a prime $p\equiv3$ $(\text{mod }4)$}\label{sec-3}

We first recall some general facts on class groups of quadratic orders as follows. Let $\Delta=df^2$ be a quadratic discriminant and $\cl(\Delta)$ the wide class group of $\cO_\Delta$. We define a group homomorphism $\phi_\Delta:\cl(d)\to \cl(\Delta)$ by $[\fa]\to[\fa\cap\cO_{\Delta}]$ if $\fa+f\cO_d=\cO_d$. We denote by $\hat{\phi}_\Delta:\cl(\Delta)\to\cl(d)$ a group homomorphism sending $[\hat{\fa}]$ to $[\hat{\fa} \cO_{d}]$. It is well-known that $\hat{\phi}_\Delta \circ \phi_\Delta$ is the identity on $\cl(d)$, and $\hat{\phi}_\Delta$ is surjective (cf. \cite[Assertions A and B in p. 183 and Theorem 5.9.7]{Hal13}). We define the {\it ambiguous class group} of $\cO_\Delta$ by $\am(\Delta):=\cl(\Delta)[2]$. An ideal $(a,\frac{b+\sqrt{\Delta}}{2})$ in $\cO_{\Delta}$ is called an {\it ambiguous ideal} if $(a,\frac{b+\sqrt{\Delta}}{2})=(a,\frac{-b+\sqrt{\Delta}}{2})$. We can always take $b$ such that $0\le b <2a$; in this case, an ambiguous ideal $(a,\frac{b+\sqrt{\Delta}}{2})$ satisfies either $b=0$ or $b=a$. Also, it is known that if $\Delta>0$, $\cN(\varepsilon_\Delta)=1$ and $\Delta$ is not of the form $4x^2+y^2$ with relatively prime positive integers $x$, $y$, then every ambiguous class contains an ambiguous ideal (cf. \cite[Theorem 5.6.11]{Hal13}). 

Let $p$ be a prime such that $p\equiv3$ $(\text{mod }4)$. Then we have that $h(8p)$ is odd and $\cN(\varepsilon_{8p})=1$. 
We write $\varepsilon_{8p}=t+u\sqrt{2p}$ and $\varepsilon_{32p}=q+r\sqrt{8p}$, where $t$, $u$, $q$ and $r\in\NN$. 
Note that $t$ is odd and $u$ is even as $t^2-2pu^2=1$; hence $\varepsilon_{8p}=\varepsilon_{32p}\in\cO_{32p}$. 
We have $h(32p)=2h(8p)/(\cO_{8p}^\times:\cO_{32p}^\times)=2h(8p)$ (cf. \cite[p. 328]{Hal13}). For the following split exact sequence 
$$\xymatrix{
1 \ar[r] &\ker{\hat{\phi}_{32p}}
\ar[r] &\cl(32p)\ar[r]^{\hat{\phi}_{32p}} &\cl(8p) \ar[r] &1,}$$
we have $\ker{\hat{\phi}_{32p}}=\am(32p)$ because $\vert\ker{\hat{\phi}_{32p}}\vert=2$ 
and $h(32p)\equiv 2$ $(\text{mod }4)$. Hence we get  
$$\cl(32p)=\am(32p)\oplus \phi_{32p}(\cl(8p)).$$ 

Now we present explicit representatives for ambiguous classes as follows. Since $p\equiv3$ $(\text{mod }4)$, $32p$ is not of the form $4x^2+y^2$ with relatively prime positive integers $x$, $y$; then every ambiguous class contains an ambiguous ideal. 
There are $8$ ambiguous ideals in $\cO_{32p}$ as follows: 
$$(1,\sqrt{8p}),\,(8,\sqrt{8p}),\,(p,\sqrt{8p}),\,(8p,\sqrt{8p}),\,$$
$$(4,2+\sqrt{8p}),\,(8,4+\sqrt{8p}),\,(4p,2p+\sqrt{8p}),\,(8p,4p+\sqrt{8p}).$$
\begin{lem}\label{am-rep}
For a prime $p\equiv3$ $(\text{mod }4)$, the ambiguous class group $\am(32p)$ of $\cO_{32p}$ is given by 
$$\am(32p)=\big\{\big[(1,\sqrt{8p})\big],\,\big[(4,2+\sqrt{8p})\big]\big\}.$$
\end{lem}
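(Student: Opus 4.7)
The plan is to combine the structural facts established just before the lemma---$|\am(32p)|=2$ and the enumeration of the eight ambiguous ideals (every ambiguous class containing one of them)---with a direct non-principality check for $(4,2+\sqrt{8p})$. Since $|\am(32p)|=2$, it suffices to exhibit two ambiguous ideals representing distinct ideal classes.

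The ideal $(1,\sqrt{8p})$ coincides with $\cO_{32p}$ and thus represents the trivial class, while $(4,2+\sqrt{8p})$ appears explicitly in the above list of ambiguous ideals, so its class lies in $\am(32p)$. It remains only to show that $(4,2+\sqrt{8p})$ is non-principal.

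Suppose for contradiction that $(4,2+\sqrt{8p})=(\alpha)$ with $\alpha=a+b\sqrt{8p}\in\cO_{32p}$. Then $a^2-8pb^2=\pm 4$, and reduction modulo $4$ forces $a$ to be even; write $a=2a'$. The sign $-4$ gives $a'^2-2pb^2=-1$, yielding a norm-$(-1)$ unit in $\cO_{8p}=\ZZ[\sqrt{2p}]$ and contradicting $\cN(\varepsilon_{8p})=1$. The sign $+4$ gives $a'^2-2pb^2=1$, so $a'+b\sqrt{2p}$ is a norm-one unit of $\cO_{8p}$ and hence equals $\pm\varepsilon_{8p}^n=\pm\varepsilon_{32p}^n$ for some $n\in\ZZ$; thus $\alpha=\pm 2\varepsilon_{32p}^n$ and $(\alpha)=(2)$. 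Since $2\sqrt{8p}=2(2+\sqrt{8p})-4$, the set $\{4,\,2+\sqrt{8p}\}$ is a $\ZZ$-basis of $(4,2+\sqrt{8p})$, so an equation $2=4x+(2+\sqrt{8p})y$ with $x,y\in\ZZ$ would force $y=0$ and then $2=4x$, which is impossible. Hence $(2)\neq(4,2+\sqrt{8p})$, proving non-principality.

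The main technical point is the norm analysis, which uses two facts recalled at the start of Section~\ref{sec-3}: for $p\equiv 3\ (\text{mod }4)$ the fundamental discriminant of $\QQ(\sqrt{2p})$ is $8p$, so $\cO_{8p}=\ZZ[\sqrt{2p}]$ is the maximal order, and $\varepsilon_{8p}=\varepsilon_{32p}$. Everything else is an immediate consequence of the already-established structure $|\am(32p)|=2$.
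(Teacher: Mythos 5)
Your proof is correct, but it follows a genuinely different route from the paper's. The paper translates the question into the language of quadratic irrationals: it assumes a $\GL_2(\ZZ)$-matrix carries $\sqrt{8p}$ to $(2+\sqrt{8p})/4$, extracts from the three resulting Diophantine conditions the equation $\vert\delta^2-2pc^2\vert=1$ with $\delta$ and $c$ both odd, and kills this with a congruence modulo $8$ (indeed $\delta^2-2pc^2\equiv 1-2p\equiv 3\ (\mathrm{mod}\ 8)$). You instead argue ideal-theoretically: since $\vert\am(32p)\vert=2$ and both ideals are ambiguous and invertible, it suffices to show $(4,2+\sqrt{8p})$ is non-principal, which you do by the norm computation $a^2-8pb^2=\pm4$, ruling out the sign $-4$ via $\cN(\varepsilon_{8p})=1$ and the sign $+4$ by identifying the would-be generator with $\pm2\varepsilon_{32p}^n$ and checking directly that $2\notin 4\ZZ+(2+\sqrt{8p})\ZZ$. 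Both arguments ultimately rest on the same arithmetic obstruction coming from $p\equiv3\ (\mathrm{mod}\ 4)$; yours leans on the unit-group facts ($\cN(\varepsilon_{8p})=1$ and $\varepsilon_{8p}=\varepsilon_{32p}$) already recorded at the start of Section~\ref{sec-3}, and is arguably more transparent as a non-principality statement, while the paper's matrix computation stays inside the quadratic-irrational framework used throughout and is the template reused in the subsequent Remark to sort the remaining six ambiguous ideals into the two classes. One small point worth making explicit in your write-up: the step ``$(4,2+\sqrt{8p})=(\alpha)$ implies $\vert\cN(\alpha)\vert=4$'' uses that the index $[\cO_{32p}:\alpha\cO_{32p}]$ equals $\vert\cN(\alpha)\vert$ and that $[\cO_{32p}:(4,2+\sqrt{8p})]=4$, both standard but worth a word.
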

\begin{proof}
It suffices to show that $\sqrt{8p}\not\sim (2+\sqrt{8p})/4$ in $\mathbb{X}_{32p}$ 
since $\mathfrak{X}_{32p}$ corresponds to $\cl(32p)$ and 
$\left\vert\am(32p)\right\vert=2$. 
We suppose that there exists a matrix ${\footnotesize \begin{pmatrix}a& b \\ c&d\end{pmatrix}}$
$\in\GL_2(\ZZ)$ such that $(a\sqrt{8p}+b)/(c\sqrt{8p}+d)=(2+\sqrt{8p})/4$. 
Then we have 
\begin{align}
\label{eq-1-new}\left\vert ad-bc \right\vert =1,\\
\label{eq-2-new}4a=2c+d,\\
\label{eq-3-new}4b=8pc+2d.
\end{align}
By \eqref{eq-2-new} we have that $d$ is even, and we write $d=2\delta$. By \eqref{eq-1-new} we get that $bc$ is odd; by \eqref{eq-2-new} we see that $\delta$ is odd. Substituting \eqref{eq-2-new} and \eqref{eq-3-new} into \eqref{eq-1-new}, we have 
$$\left\vert \delta^2-2pc^2\right\vert=1.$$ 
Since $c$ and $\delta$ are odd, we have that $\delta^2-2pc^2\equiv3$ $(\text{mod }8)$; so, it is a contradiction.
\end{proof}
\begin{rmk}
{\rm For an ambiguous ideal $(a,\frac{b+\sqrt{\Delta}}{2})$ such that  $0\le b<2a$, let $(\check{a},\check{b}):=(\frac{|\Delta|}{4a},0)$ if $b=0$, and let $(\check{a},\check{b}):=(\frac{|\Delta|}{a},\frac{|\Delta|}{a})$ if $a=b$. We call $\big((a,\frac{b+\sqrt{\Delta}}{2}),\,(\check{a},\frac{\check{b}+\sqrt{\Delta}}{2})\big)$ an ambiguous pair. We note that $\big[(a,\frac{b+\sqrt{\Delta}}{2})\big]=\big[(\check{a},\frac{\check{b}+\sqrt{\Delta}}{2})\big]$ in $\cl(\Delta)$. If $\Delta>0$, $\cN(\varepsilon_\Delta)=1$ and $\Delta$ is not of the form $4x^2+y^2$ with relatively prime integers $x$, $y$,  then every ambiguous class contains exactly two ambiguous pairs (cf. \cite[Theorem 5.6.11]{Hal13}). We have that $\sqrt{8p}\not\sim\sqrt{8p}/8$ if $p\equiv3$ $(\text{mod }8)$, and $\sqrt{8p}\not\sim(4+\sqrt{8p})/8$ if $p\equiv7$ $(\text{mod }8)$ in a similar way to the above proof. Therefore, we get that in $\am(32p)\subset\cl(32p)$ 
$$
\left\{
\begin{array}{l}
\big[(1,\sqrt{8p})\big]=\big[(8,4+\sqrt{8p})\big]=\big[(4p,2p+\sqrt{8p})\big]=\big[(8p,\sqrt{8p})\big],\\
\big[(4,2+\sqrt{8p})\big]=\big[(8,\sqrt{8p})\big]=\big[(p,\sqrt{8p})\big]=\big[(8p,4p+\sqrt{8p})\big]
\end{array}\right. 
\text{if }p\equiv3 \,(\text{mod }8), \,
\text{and}$$
$$
\left\{
\begin{array}{l}
\big[(1,\sqrt{8p})\big]=\big[(8,\sqrt{8p})\big]=\big[(p,\sqrt{8p})\big]=\big[(8p,\sqrt{8p})\big],\\
\big[(4,2+\sqrt{8p})\big]=\big[(8,4+\sqrt{8p})\big]=\big[(4p,2p+\sqrt{8p})\big]=\big[(8p,4p+\sqrt{8p})\big]
\end{array}\right. 
\text{if }p\equiv7 \,(\text{mod }8).$$}
\end{rmk}
\smallskip

Let $p\equiv3$ $(\text{mod }4)$ be a prime, $\omega_{32p}=\sqrt{32p}/2$ and $\omega_{32p}^\am:=(4+\sqrt{32p})/8$.
\begin{lem}\label{am-irr}
Following the above notation, let $\eta=\frac{b+\sqrt{32p}}{2a}\in\mathbb{X}_{32p}^+$ such that $a$ is odd. 
If $\eta\ast\omega_{32p}^\am=\frac{b_\am+\sqrt{32p}}{2a_\am}$, then $a_\am=4a$ and $b_\am\equiv 4$ $(\text{mod }8)$,  
where the composition $\ast$ on $\mathbb{X}_{32p}^+$ is defined in Section \ref{sec-2}. 
\end{lem}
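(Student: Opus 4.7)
The plan is to apply the composition formula $\ast$ on $\mathbb{X}_{32p}^+$ recalled in Section~\ref{sec-2} directly. Write $(a_1,b_1)=(a,b)$ for $\eta$ and $(a_2,b_2)=(4,4)$ for $\omega_{32p}^{\am}$. The first step is to identify $e:=\gcd(a_1,a_2,(b_1+b_2)/2)$. Since $\Delta=32p$ is even, the defining relation $b^2-4ac=32p$ with $a,c\in\mathbb{Z}$ forces $b$ to be even, so in particular $(b+4)/2$ is an integer. The hypothesis that $a$ is odd gives $\gcd(a,4)=1$, and hence $e=1$.

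Once $e=1$ is established, both assertions follow immediately from the composition formula. The identity $a_3=a_1a_2/e^2$ gives $a_{\am}=4a$, which is the first claim. For the second claim, among the three simultaneous congruences defining $b_3=b_{\am}$ modulo $2a_3=8a$, the middle one reads $b_3\equiv b_2\pmod{2a_2/e}$, i.e.\ $b_{\am}\equiv 4\pmod{8}$.

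I do not anticipate any real obstacle: the lemma is essentially a one-line substitution into the composition formula. The only non-mechanical input is the parity observation that $b$ must be even, which is forced by the even discriminant $\Delta=32p$ together with $\gcd(a,b,c)=1$. One should of course note in passing that the three simultaneous congruences for $b_3$ are consistent in this situation, but this consistency is precisely the content of the general fact that $\ast$ is a well-defined composition on $\mathbb{X}_{\Delta}^+$, which is already invoked in Section~\ref{sec-2}.
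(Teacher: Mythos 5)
Your proposal is correct and follows essentially the same route as the paper: compute $e=\gcd(a,4,(b+4)/2)=1$ from the oddness of $a$, then read off $a_{\am}=4a$ from $a_3=a_1a_2/e^2$ and $b_{\am}\equiv 4\pmod 8$ from the congruence $b_3\equiv b_2\pmod{2a_2/e}$. The paper's proof is the same one-line substitution into the composition formula, merely stating $e=1$ without spelling out the (correct) parity remark about $b$.
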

\begin{proof}
Since $\gcd(a,4,(b+4)/2)=1$, we have that $a_\am=4a$, and $b_\am$ is a unique solution modulo $8a$ such that 
$x\equiv b$ $(\text{mod }2a)$, $x\equiv 4$ $(\text{mod }8)$, and $\frac{b+4}{2}x\equiv \frac{32p+4b}{2}$ $(\text{mod }8a)$.
Therefore, we get that $b_\am\equiv 4$ $(\text{mod }8)$.
\end{proof}

Let $p\equiv3$ $(\text{mod }4)$ be a prime, and let $\varepsilon_{32p}=q+r\sqrt{8p}$ be the fundamental unit of $\cO_{32p}$. We recall that $\varepsilon_{8p}=\varepsilon_{32p}$.
\begin{lem}\label{ZY-thm-new} 
Following the above notation,
$2\varepsilon_{32p}=(X+Y\sqrt{2p})^2$ for some integers $X$ and $Y$.
\end{lem}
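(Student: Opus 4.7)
The plan is to exploit the ramification of $2$ in the maximal order $\mathcal{O}_{8p}=\mathbb{Z}[\sqrt{2p}]$ of $\mathbb{Q}(\sqrt{2p})$, combined with the oddness of $h(8p)$ already recorded in the paper. Since $2\mid 8p$, the prime $2$ ramifies, so $(2)=\mathfrak{p}^{2}$ for a unique prime ideal $\mathfrak{p}$ of $\mathcal{O}_{8p}$. Because $[\mathfrak{p}]^{2}=[(2)]$ is trivial in the wide class group and $h(8p)$ is odd, the class $[\mathfrak{p}]$ must itself be trivial; hence $\mathfrak{p}=(\pi)$ with $\pi=X+Y\sqrt{2p}$ for some integers $X,Y$.

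Squaring gives $(\pi^{2})=(2)$, so $\pi^{2}=2\eta$ for a unit $\eta\in\mathcal{O}_{8p}^{\times}$. Since $\mathcal{N}(\varepsilon_{8p})=1$, every unit has the form $\pm\varepsilon_{8p}^{n}$ with $n\in\mathbb{Z}$. Fixing the real embedding in which $\sqrt{2p}>0$, and replacing $\pi$ by $-\pi$ if necessary so that $\pi>0$, I get $\pi^{2}>0$, which together with $2>0$ forces $\eta=\varepsilon_{8p}^{n}$ (the $+$ sign). Now replace $\pi$ by $\pi\cdot\varepsilon_{8p}^{-k}$ for an appropriate $k\in\mathbb{Z}$; this element still lies in $\mathbb{Z}[\sqrt{2p}]$ (units and their inverses are in $\mathcal{O}_{8p}$) and still generates $\mathfrak{p}$, while its square equals $2\varepsilon_{8p}^{n-2k}$. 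Choosing $k=\lfloor n/2\rfloor$ reduces us to $n\in\{0,1\}$.

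The case $n=0$ would give $\pi^{2}=2$, i.e.\ $X^{2}+2pY^{2}=2$ and $2XY=0$, which forces $Y=0$ and $X^{2}=2$, impossible. Hence $n=1$, so $\pi^{2}=2\varepsilon_{8p}=2\varepsilon_{32p}$, where the last equality is the identification $\varepsilon_{32p}=\varepsilon_{8p}$ already recorded in the excerpt. Writing out $\pi^{2}=(X+Y\sqrt{2p})^{2}$ yields the claimed representation.

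The only conceptual step is the descent from principality of $\mathfrak{p}^{2}$ to principality of $\mathfrak{p}$, which is precisely where the oddness of $h(8p)$ is used; the remaining unit-normalization and the elimination of $n=0$ are routine. A purely elementary alternative would be to start from $t^{2}-2pu^{2}=1$ with $t$ odd and $u=2v$ even, factor $(t-1)(t+1)=8pv^{2}$ using $\gcd((t-1)/2,(t+1)/2)=1$, and case-analyze the distribution of the prime factors $2$ and $p$ between the two coprime factors. The cases $\{1,2p\}$ and $\{2p,1\}$ are ruled out respectively by $-1\notin(\mathbb{F}_{p}^{\times})^{2}$ and by fundamentality of $\varepsilon_{8p}$; the remaining case depends on $p\bmod 8$ and produces explicit $(X,Y)=(2a,b)$ or $(2b,a)$. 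I prefer the ideal-theoretic route because it avoids this case split.
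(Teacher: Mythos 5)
Your argument is correct, but it follows a genuinely different route from the paper. The paper's proof is a two-step affair: it first invokes \cite[Lemma 3.2]{ZY14} to obtain \emph{rational} $X,Y$ with $2(q+r\sqrt{8p})=(X+Y\sqrt{2p})^2$, and then upgrades rationality to integrality by an elementary manipulation ($Y^2=(q+\alpha)/(2p)$ with $\alpha=\pm1$ from $q^2-8pr^2=1$, whence $Y^2\in\ZZ$ because the exponents in the factorization of $Y^2$ are even, and then $X^2=2q-2pY^2\in\ZZ$). You instead give a self-contained ideal-theoretic proof: since $\cO_{8p}=\ZZ[\sqrt{2p}]$ is the maximal order and $2$ ramifies, $(2)=\fp^2$; oddness of $h(8p)$ kills the $2$-torsion class $[\fp]$, so $\fp=(\pi)$; then $\pi^2=2\eta$ for a unit $\eta$, which positivity and $\cN(\varepsilon_{8p})=1$ force to be $\varepsilon_{8p}^{\,n}$, and the normalization $\pi\mapsto\pi\varepsilon_{8p}^{-\lfloor n/2\rfloor}$ together with the impossibility of $\pi^2=2$ pins down $n=1$. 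Both ingredients you use ($h(8p)$ odd, $\cN(\varepsilon_{8p})=1$) are recorded in Section 3 of the paper, so nothing is circular. What your route buys is independence from the external citation and a conceptual explanation of \emph{why} the factorization exists (it is the principality of the ramified prime above $2$); what the paper's route buys is brevity, since the class-group and unit bookkeeping is outsourced to \cite{ZY14}. One trivial loose end: in ruling out $n=0$ you only treat the subcase $Y=0$ of $2XY=0$; the subcase $X=0$ gives $pY^2=1$, equally impossible, and should be mentioned for completeness.
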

\begin{proof} 
By \cite[Lemma 3.2]{ZY14} there exist $X$, $Y\in\mathbb{Q}$ such that $2(q+r\sqrt{8p})=(X+Y\sqrt{2p})^2$. Then we get that $2q=X^2+2pY^2$ and $2r=XY$. We have $2pY^4-2qY^2+4r^2=0$ and $Y^2=(q+\alpha)/(2p)$, where $\alpha\in\{\pm1\}$ as $q^2-8pr^2=1$. Since the prime factorization (allowing negative prime powers) of $Y^2$ has only even exponents, $2p$ divides $q+\alpha$; then we have $Y^2\in\ZZ$ and $X^2=2q-2pY^2\in\ZZ$. Therefore, $X$ and $Y$ are integers.
\end{proof}

\begin{lem}[{\cite[Theorem 2]{Wil15}}]\label{Wil-lemma}
Following the above notation, 
$r\equiv 1$ $(\text{mod }2)$ if $p\equiv 3$ $(\text{mod }8)$ and $r\equiv 0$ $(\text{mod }2)$ if $p\equiv 7$ $(\text{mod }8)$.
\end{lem}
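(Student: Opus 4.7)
The plan is to reduce the parity of $r$ to an elementary congruence modulo $8$ via Lemma~\ref{ZY-thm-new}. First, I would write $2\varepsilon_{32p} = (X+Y\sqrt{2p})^2$ with $X, Y \in \ZZ$ and equate coefficients against $\varepsilon_{32p} = q + 2r\sqrt{2p}$ to obtain
\[ X^2 + 2pY^2 \;=\; 2q, \qquad XY \;=\; 2r. \]
As in the proof of Lemma~\ref{ZY-thm-new}, the norm relation $q^2 - 8pr^2 = 1$ forces the dichotomy $(X^2,\,2pY^2) = (q-\alpha,\,q+\alpha)$ for some $\alpha \in \{\pm 1\}$.

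Next, I would extract basic parity information. Since $\varepsilon_{8p} = t + u\sqrt{2p} = \varepsilon_{32p}$, we have $q = t$ odd; hence $X^2 = q - \alpha$ is even, so $X = 2X_0$ with $X_0 \in \ZZ$. The companion equation $2pY^2 = q + \alpha \equiv 2 \pmod{4}$ then forces $Y$ odd. This identifies the parity of $r = XY/2 = X_0 Y$ with the parity of $X_0$, which is what I ultimately want to determine.

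The heart of the argument is to substitute $q = 4X_0^2 + \alpha$ and $u = 2r = 2X_0 Y$ into the Pell equation $t^2 - 2pu^2 = 1$. The leading terms cancel and one is left with
\[ pY^2 \;=\; 2X_0^2 + \alpha. \]
Reducing modulo $8$ and using $Y^2 \equiv 1 \pmod{8}$ gives $p \equiv 2X_0^2 + \alpha \pmod{8}$. A four-way case check on $(\alpha,\, X_0 \bmod 2)$ then finishes the proof: the hypothesis $p \equiv 3 \pmod{4}$ rules out the two subcases yielding $p \equiv 1 \pmod{8}$, leaving precisely $(\alpha, X_0 \bmod 2) = (+1,\,\text{odd})$ when $p \equiv 3 \pmod{8}$ and $(\alpha, X_0 \bmod 2) = (-1,\,\text{even})$ when $p \equiv 7 \pmod{8}$; so $r$ is odd in the former case and even in the latter. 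The main (mild) obstacle is organising the case analysis cleanly and verifying that the two impossible branches are correctly excluded; no tool beyond Lemma~\ref{ZY-thm-new} and $\pmod{8}$ arithmetic is needed.
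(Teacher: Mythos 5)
Your argument is correct, but it is worth noting that the paper does not prove this lemma at all: it is quoted verbatim from Williams \cite[Theorem 2]{Wil15}, so there is no internal proof to compare against. What you have done is supply a short self-contained derivation from Lemma \ref{ZY-thm-new}, and every step checks out: from $2\varepsilon_{32p}=(X+Y\sqrt{2p})^2$ and $\cN(\varepsilon_{32p})=1$ one gets $X^2+2pY^2=2q$, $XY=2r$ and $X^2-2pY^2=\pm2$, hence $X^2=q-\alpha$, $2pY^2=q+\alpha$; since $q=t$ is odd, $X=2X_0$ is even, $q\equiv\alpha\ (\mathrm{mod}\ 4)$ forces $Y$ odd, and $r=X_0Y\equiv X_0\ (\mathrm{mod}\ 2)$. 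The relation $pY^2=2X_0^2+\alpha$ follows immediately from $2pY^2=q+\alpha=4X_0^2+2\alpha$ (you do not actually need to return to the Pell equation, which also quietly requires $X_0\neq 0$ to divide it out --- true here since $r\geq 1$, but the direct route avoids the issue), and reducing modulo $8$ with $Y^2\equiv X_0^2\cdot[X_0\text{ odd}]\equiv 1$ kills the two branches with $p\equiv 1\ (\mathrm{mod}\ 8)$ and leaves exactly the stated parity of $r$ in the two remaining cases. This is essentially the computation behind Williams' note, so your proposal makes the paper's external dependency self-contained at the cost of a short case analysis; the paper's choice buys brevity by outsourcing it.
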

\smallskip

\section{Proofs of Theorems \ref{main-thm-new} and \ref{main-thm-new-new}}\label{sec-4}
In this section, we show lemmas that are parellel to those in \cite{CGPY15}, \cite{KM1} and \cite{Miz21}, and we prove Theorems \ref{main-thm-new} and \ref{main-thm-new-new}. 

Let $p\equiv3$ $(\text{mod }4)$ be a prime, and let $\omega_{32p}=\sqrt{32p}/2$ and $\omega_{32p}^{\am}=(4+\sqrt{32p})/8$.  
We denote by $\varepsilon_{32p}=q+r\sqrt{8p}$ the fundamental unit of $\cO_{32p}$.
\begin{lem}\label{pf-lem-1-new}
Following the above notation, for any equivalence class $[\eta]_\sim$ in $\fX_{32p}$, there exists a representative
$\frac{b+\sqrt{32p}}{2a}\in\mathbb{X}^0_{32p}$ of $[\eta]_\sim$
such that $a$ is an odd integer. 
\end{lem}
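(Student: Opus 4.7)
The plan is to produce a reduced representative of $[\eta]_\sim$ and, should its denominator-parameter $a$ turn out to be even, to move one step along the continued fraction cycle to obtain a reduced element whose $a$ is odd. The driving observation is that primitivity of the underlying quadratic form, together with the parity of the discriminant $\Delta=32p$, prevents two consecutive elements of the cycle from both having even $a$.

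To carry this out, I would first invoke Proposition~\ref{prop-cfrac}(iii) to obtain a reduced element $\xi_0=(b_0+\sqrt{32p})/(2a_0)\in\mathbb{X}^0_{32p}$ with $\xi_0\sim\eta$. If $a_0$ is odd, we are done, so assume $a_0$ is even. Set $u_0=\lfloor\xi_0\rfloor$ and let $\xi_1=1/(\xi_0-u_0)\in\mathbb{X}^0_{32p}$ be the next reduced element in the continued fraction cycle; by Proposition~\ref{prop-cfrac}(ii) we have $\xi_1\sim_+-\xi_0$, and in particular $\xi_1\sim\xi_0\sim\eta$.

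Writing $\xi_1=(b_1+\sqrt{32p})/(2a_1)$ and $c_0=(b_0^2-32p)/(4a_0)\in\ZZ$ for the third coefficient of the primitive form attached to $\xi_0$, a direct rationalization of $\xi_1=2a_0/(-(2a_0u_0-b_0)+\sqrt{32p})$ yields the standard reduction recurrence
$$a_1\;=\;b_0u_0-a_0u_0^2-c_0.$$
Because $\Delta=32p$ is even, $b_0$ is even; combined with the assumption that $a_0$ is even, the primitivity condition $\gcd(a_0,b_0,c_0)=1$ forces $c_0$ to be odd. Reducing the displayed recurrence modulo $2$ then gives $a_1\equiv-c_0\equiv 1\pmod 2$, so $\xi_1$ is the desired representative.

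I do not foresee a serious obstacle: the entire argument rests on the mod-$2$ observation that $a$ and $c$ cannot both be even in a primitive form whose middle coefficient $b$ is even. The only care needed is in handling signs in the continued fraction recurrence for $a_1$ and in confirming that $\xi_1$ is again reduced, both of which are standard and already subsumed by Proposition~\ref{prop-cfrac}.
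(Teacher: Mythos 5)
Your proof is correct and is essentially identical to the paper's: both take a reduced representative, pass to the next element $\xi_1$ of the continued fraction cycle, and use the recurrence $a_1=-c_0+b_0u_0-a_0u_0^2$ together with the evenness of $b_0$ and primitivity $\gcd(a_0,b_0,c_0)=1$ to conclude that $a_0$ and $a_1$ cannot both be even. The only cosmetic difference is that the paper phrases the conclusion as ``at least one of $a_0,a_1$ is odd'' rather than splitting into cases.
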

\begin{proof} 
Let $\xi=\frac{\beta+\sqrt{32p}}{2\alpha}\in\mathbb{X}^0_{32p}$ such that $\xi\sim \eta$ and $\gamma:=\frac{\beta^2-32p}{4\alpha}\in\mathbb{Z}$. 
With the same notation as in Proposition \ref{prop-cfrac}, $\xi_1=\frac{\beta_1+\sqrt{32p}}{2\alpha_1}\in\mathbb{X}^0_{32p}$ is given by 
$$\xi_1
= \frac{1}{\xi-\lfloor \xi \rfloor}
=\frac{2\alpha}{\beta-2\alpha\lfloor \xi \rfloor+\sqrt{32p}}
=\frac{-\beta+2\alpha\lfloor \xi \rfloor+\sqrt{32p}}{-2\gamma+2\beta\lfloor \xi \rfloor - 2\alpha \lfloor \xi \rfloor^2}.
$$
At least one of $\{\alpha,\alpha_1\}$ is odd because $\alpha_1=-\gamma+\beta\lfloor \xi \rfloor-\alpha\lfloor \xi \rfloor^2$, $\gcd(\alpha,\beta,\gamma)=1$ and $\beta$ is even. 
\end{proof}

For a positive quadratic discriminant $\Delta$ and $\eta\in\mathbb{X}_\Delta$, let $\eta^\op:=\lfloor \eta \rfloor-\eta'$. We have that $\eta^\op\sim_+ -\eta'$; so, $[\eta]_{\sim_+}$ and $[\eta^\op]_{\sim_+}$ are inverses of each other in $\XX_\Delta^+/\sim_+$.
\begin{lem}\label{pf-lem-2-new}
Following the above notation,
let $\eta\in\XX_{32p}$.
Then we have
$$\chi^{(32p)}_{-4,-8p}([\eta]_{\sim_+})\Psi([\eta]_{\sim_+})
=\chi^{(32p)}_{-4,-8p}([\eta^\op]_{\sim_+})\Psi([\eta^\op]_{\sim_+}).$$
\end{lem}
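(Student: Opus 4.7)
The plan is to reduce the identity to two pointwise statements by observing that $\eta^\op = \lfloor\eta\rfloor + (-\eta')$ differs from $-\eta'$ only by an integer translation; since $T^{\lfloor\eta\rfloor} \in \SL_2(\ZZ)$, we have $\eta^\op \sim_+ -\eta'$. Both $\chi^{(32p)}_{-4,-8p}$ and $\Psi$ are well-defined on $\sim_+$-classes---the former by the construction recalled in Section~\ref{sec-2}, the latter via the relation $M_{A\xi} = A M_\xi A^{-1}$ for $A \in \SL_2(\ZZ)$ (as in the proof of Lemma~\ref{HSDS-new}) combined with Lemma~\ref{matrix-lem} \eqref{matrix-lem-1}. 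It therefore suffices to prove
$$\chi^{(32p)}_{-4,-8p}(\eta) = \chi^{(32p)}_{-4,-8p}(-\eta') \quad\text{and}\quad \Psi(\eta) = \Psi(-\eta').$$

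Writing $\eta = \frac{b+\sqrt{32p}}{2a}$, one has $-\eta' = \frac{-b+\sqrt{32p}}{2a}$; both representatives share the same $a$ and the same $c = (b^2 - 32p)/(4a)$. Because the genus character is defined by Kronecker symbols applied to $a$ (or, in case of a common factor with some $q_i^\ast$, to $c$), the first equality is immediate from the definition.

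For the second equality I would invoke Lemma~\ref{HSDS-new} and compute $M_\eta$ and $M_{-\eta'}$ from formula~\eqref{omega-2}. With $\varepsilon_{32p} = q + r\sqrt{8p}$, a short matrix calculation yields
$$M_\eta = \begin{pmatrix} q + br/2 & -cr \\ ar & q - br/2 \end{pmatrix},\qquad M_{-\eta'} = \begin{pmatrix} q - br/2 & -cr \\ ar & q + br/2 \end{pmatrix},$$
so the two matrices agree except that their diagonal entries are interchanged. Using $q^2 - 8pr^2 = 1$ together with $b^2 - 32p = 4ac$, one obtains $(q+br/2)(q-br/2) = q^2 - b^2r^2/4 = 1 - acr^2 \equiv 1 \pmod{|ar|}$, so $q - br/2$ and $q + br/2$ are mutual inverses modulo $|ar|$. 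Property~\eqref{3.2.4-new} of the Dedekind sum then yields $s(q - br/2, |ar|) = s(q + br/2, |ar|)$, whence $n_{M_\eta} = n_{M_{-\eta'}}$ and therefore $\Psi(\eta) = \Psi(-\eta')$ by Lemma~\ref{HSDS-new}. The only genuine calculation is the matrix product; everything else assembles from the cited lemmas and the fundamental-unit relation $q^2 - 8pr^2 = 1$.
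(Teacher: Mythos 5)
Your proof is correct, but it takes a genuinely different route from the paper's for the key step. The paper passes to a reduced representative $\xi=\frac{b+\sqrt{32p}}{2a}\in\mathbb{X}^0_{32p}$ with $a$ odd (via Lemma \ref{pf-lem-1-new}), writes $\xi=[\overline{v_0,\dots,v_{2m-1}}]$, and uses the continued-fraction reversal of Proposition \ref{prop-cfrac} \eqref{inverse-cfrac} to see that $\xi^\op=[\overline{v_0,v_{2m-1},\dots,v_1}]$ has the same alternating sum because the period length is even; the character contribution is $\chi_{-4}(a)$ for both. You instead work with an arbitrary representative, observe that $\eta$ and $-\eta'$ share the same $a$ and $c$ (so the genus character is trivially equal), and prove $\Psi(\eta)=\Psi(-\eta')$ through Lemma \ref{HSDS-new}: your matrices $M_\eta$ and $M_{-\eta'}$ are correct, $(q+br/2)(q-br/2)=1-acr^2\equiv 1\pmod{|ar|}$ follows from $q^2-8pr^2=1$ (or directly from $\det M_\eta=1$), and property \eqref{3.2.4-new} gives the equality of the two Dedekind sums and hence of $n_{M_\eta}$ and $n_{M_{-\eta'}}$. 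Your argument has the small advantages of not needing Lemma \ref{pf-lem-1-new} (no odd-$a$ representative is required) and of reusing the Dedekind-sum machinery already set up for Lemma \ref{pf-lem-3-new}, at the cost of a matrix computation; the paper's argument is shorter given that Proposition \ref{prop-cfrac} \eqref{inverse-cfrac} is already available. Both are complete.
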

\begin{proof} 
By Lemma \ref{pf-lem-1-new} we can take $\xi=\frac{b+\sqrt{32p}}{2a}\in \mathbb{X}^0_{32p}$ such that $\xi\sim \eta$ and $a$ is odd. The period length of $\xi$ is even as $\cN(\varepsilon_{32p})=1$. Let $\xi=[\overline{v_0,v_1,\cdots,v_{2m-1}}]$. 
Then $\xi^\op=\frac{(2a\lfloor \xi \rfloor-b)+\sqrt{32p}}{2a}$ satisfies that $\xi^\op\sim \eta^\op$ 
and $\xi^\op=[\overline{v_0,v_{2m-1},v_{2m-2},\cdots,v_2,v_1}]$ by Proposition \ref{prop-cfrac} \eqref{inverse-cfrac}. Hence, we get that 
$\chi^{(32p)}_{-4,-8p}([\eta]_{\sim_+})\Psi([\eta]_{\sim_+})
=\chi_{-4}(a)\sum_{i=0}^{2m-1}(-1)^i v_i
=\chi^{(32p)}_{-4,-8p}([\eta^\op]_{\sim_+})\Psi([\eta^\op]_{\sim_+})$.
\end{proof}

\begin{lem}\label{pf-lem-3-new}
Following the above notation, let $\eta\in\XX_{32p}$. 
\begin{enumerate}[(i)]
\item\label{lem-4.4.1} If $p\equiv3$ $(\text{mod }8)$, then 
$$\Psi([\omega_{32p}]_{\sim_+}) \equiv \chi^{(32p)}_{-4,-8p}([\eta]_{\sim_+})\Psi([\eta]_{\sim_+})\quad (\text{mod }8).$$
\item\label{lem-4.4.2} If $p\equiv7$ $(\text{mod }8)$, then 
\begin{eqnarray*}
&&\Psi([\omega_{32p}]_{\sim_+})+
\chi^{(32p)}_{-4,-8p}([\omega^\am_{32p}]_{\sim_+})
\Psi([\omega^\am_{32p}]_{\sim_+})\\
&\equiv& 
\chi^{(32p)}_{-4,-8p}([\eta]_{\sim_+})\Psi([\eta]_{\sim_+})+
\chi^{(32p)}_{-4,-8p}([\eta\ast \omega^\am_{32p}]_{\sim_+})
\Psi([\eta\ast \omega^\am_{32p}]_{\sim_+})
\quad (\text{mod }8)
\end{eqnarray*}
and  
$$\Psi([\omega_{32p}]_{\sim_+}) \equiv 
\chi^{(32p)}_{-4,-8p}([\omega_{32p}^\am]_{\sim_+})\Psi([\omega_{32p}^\am]_{\sim_+})+4 \quad (\text{mod }8).$$
\end{enumerate}
\end{lem}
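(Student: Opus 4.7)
\medskip\noindent\textbf{Proof plan.} The plan is to convert $\Psi$-values into matrix quantities via Lemma \ref{HSDS-new} and carry out a mod-$8$ Dedekind-sum analysis organized by the ambiguous-class decomposition $\cl(32p) = \am(32p) \oplus \phi_{32p}(\cl(8p))$ from Section \ref{sec-3}. By Lemma \ref{pf-lem-1-new} I may take a representative $\eta = \frac{b+\sqrt{32p}}{2a} \in \XX^0_{32p}$ with $a$ odd; since $\sigma_{32p}=0$ and $(\Delta-\sigma_\Delta)/4 = 8p$, formula (\ref{omega-2}) specialises to
\[
M_\eta = \begin{pmatrix} q + rb/2 & -rc \\ ra & q - rb/2 \end{pmatrix}, \qquad b^2 - 4ac = 32p,
\]
so $z=ra$, $w = q-rb/2$, trace $2q$, and $n(\eta) = \frac{2q}{ra} - \sign(ra)\bigl(3 + 12\,s(q-rb/2,\,|ra|)\bigr)$. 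Since $a$ is odd, $\chi^{(32p)}_{-4,-8p}([\eta]_{\sim_+}) = \chi_{-4}(a)$, which will link to the Dedekind sum through the Jacobi-symbol identity (\ref{3.2.6-new}).

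For Part (\ref{lem-4.4.1}), Lemma \ref{Wil-lemma} gives $r$ odd, so $z = ra$ is odd. I would apply reciprocity (\ref{3.2.3-new}) to $s(q-rb/2,\,ra)$ and evaluate the resulting Jacobi symbol via (\ref{3.2.6-new}); this produces exactly the factor $\chi_{-4}(a)$ together with terms whose values modulo $8$ are independent of $(a,b)$. Consequently $\chi_{-4}(a)\,n(\eta)$ collapses modulo $8$ onto the specialisation at $a=1$, $b=0$, which is $n(\omega_{32p}) = \Psi([\omega_{32p}]_{\sim_+})$ by Lemma \ref{HSDS-new}.

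For Part (\ref{lem-4.4.2}), $r$ is even and the single-class argument loses $2$-adic control of $z = ra$. The remedy is to pair $[\eta]_{\sim_+}$ with $[\eta \ast \omega^{\am}_{32p}]_{\sim_+}$: by Lemma \ref{am-irr} the paired class has parameters $(4a, b_{\am})$ with $b_{\am}\equiv 4 \pmod{8}$, so its lower-left entry is $4ra$. When the contributions $\chi\cdot n$ of these two classes are summed, the reciprocity step applied simultaneously to $s(q-rb/2,\,ra)$ and to its analogue at $(4a,b_\am)$ cancels the mod-$8$ obstruction coming from the $2$-part of $r$, yielding the first congruence of Part (\ref{lem-4.4.2}). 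Specialising that first congruence to $\eta = \omega_{32p}$ (and using $[\omega_{32p} \ast \omega^{\am}_{32p}]_{\sim_+} = [\omega^{\am}_{32p}]_{\sim_+}$ from Lemma \ref{am-rep}) then yields the second congruence; here Lemma \ref{ZY-thm-new} writes $2\varepsilon_{32p} = (X+Y\sqrt{2p})^2$ and pins down $q \pmod 4$, confirming that the surviving discrepancy is $4$ rather than $0$.

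The main obstacle will be the mod-$8$ bookkeeping of $12\,s(q-rb/2,\,ra)$ when $r$ is even, because (\ref{3.2.5-new}) guarantees only $6k\,s(h,k)\in\ZZ$, i.e.\ $12\,s(h,k)$ is a priori controlled only up to denominators divisible by the $2$-part of $k$. Careful use of Lemma \ref{matrix-lem} to conjugate $M_\eta$ into a form with an odd lower-left entry before invoking reciprocity, together with Lemma \ref{ZY-thm-new} and the explicit parity information from Lemma \ref{Wil-lemma}, should supply the missing $2$-adic precision needed to close the argument.
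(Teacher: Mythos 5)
Your skeleton matches the paper's: reduce to an odd-$a$ representative (Lemma \ref{pf-lem-1-new}), convert $\Psi$ to $n_M$ via \eqref{omega-2} and Lemma \ref{HSDS-new}, use Lemma \ref{matrix-lem}\eqref{matrix-lem-2} to repair the parity of the lower-left entry when $r$ is even, and pair $[\eta]$ with $[\eta\ast\omega^{\am}_{32p}]$ in case (ii). But the plan has a genuine gap at its arithmetic core. After the Dedekind-sum manipulations, everything hinges on evaluating Jacobi symbols of the form $\big(\tfrac{q\pm rm}{a}\big)$ and $\big(\tfrac{q+rm_{\am}}{a}\big)$ (equivalently, after reciprocity, $\big(\tfrac{a}{q+rm}\big)$); your claim that the leftover terms are ``independent of $(a,b)$ modulo $8$'' is exactly the statement that these symbols can be computed, and you give no mechanism for it. The paper's mechanism is Lemma \ref{ZY-thm-new}: writing $2\varepsilon_{32p}=(T+U\sqrt{2p})^2$ gives $2(q\pm rm)=\big(T\pm\tfrac{Ub}{4}\big)^2-\tfrac{U^2ac}{4}$, hence $2(q\pm rm)$ is a square modulo $a$ and $\big(\tfrac{q\pm rm}{a}\big)=\big(\tfrac{2}{a}\big)=(-1)^{(a^2-1)/8}$ (equation \eqref{cal-Jacobi-symbol}); the same identity shows $\big(\tfrac{q+rm_{\am}}{a}\big)$ takes the \emph{same} value, which is what makes the paired contributions in case (ii) cancel. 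You invoke Lemma \ref{ZY-thm-new} only to ``pin down $q\pmod 4$'' for the last congruence, which is not its role; without \eqref{cal-Jacobi-symbol} the argument cannot close in either case.

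A second concrete gap: the final congruence of part \eqref{lem-4.4.2} does not follow by ``specialising the first congruence to $\eta=\omega_{32p}$.'' Since $[\omega_{32p}]_{\sim_+}$ is the identity and $\omega_{32p}\ast\omega^{\am}_{32p}=\omega^{\am}_{32p}$, that specialisation reads $X+Y\equiv X+Y$ and carries no information; the assertion $\Psi([\omega_{32p}]_{\sim_+})\equiv\chi^{(32p)}_{-4,-8p}([\omega^{\am}_{32p}]_{\sim_+})\Psi([\omega^{\am}_{32p}]_{\sim_+})+4\pmod 8$ is strictly stronger than the invariance of the sum. The paper proves it by choosing the explicit odd-$a$ representative $-1/\omega^{\am}_{32p}=\tfrac{-4+\sqrt{32p}}{2(2p-1)}$ of the class $[\omega^{\am}_{32p}]$ and running the general computation with $a=2p-1$, where the discrepancy $\tfrac{(2p-1)^2-1}{2}\equiv 4\pmod 8$ emerges from the $(a^2-1)/4$ term of \eqref{cal-Jacobi-symbol}. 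You would need an analogous explicit computation; it is not a formal consequence of the first congruence.
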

\begin{proof} 
We recall $\cN(\varepsilon_{32p})=1$ and $\varepsilon_{32p}=q+r\sqrt{8p}$ such that $q$ is odd.
Let $\xi=\frac{b+\sqrt{32p}}{2a}\in \mathbb{X}_{32p}$ such that $\xi\sim \eta$, $q+rb/2>0$, and $a$ is a positive odd integer;  such $\xi$ always exists due to Lemma \ref{pf-lem-1-new}. 
Note that
$$\chi^{(32p)}_{-4,-8p}([\omega^\am_{32p}]_{\sim_+})
=\chi^{(32p)}_{-4,-8p}\big(\tfrac{4+\sqrt{32p}}{8}\big)
=\chi_{-4}(1-2p)=-1$$
and
$$
\chi^{(32p)}_{-4,-8p}([\xi\ast \omega^\am_{32p}]_{\sim_+})
=\chi^{(32p)}_{-4,-8p}([\xi]_{\sim_+})\chi^{(32p)}_{-4,-8p}([\omega^\am_{32p}]_{\sim_+})
=-\chi_{-4}(a).
$$
Let $c=\frac{b^2-32p}{4a}\in\mathbb{Z}$, and let $m$ be an integer such that $b=2m$. Let $\xi\ast\omega_{32p}^\am=\frac{b_\am+\sqrt{32p}}{2a_\am}$, where $a_\am=4a$ and $b_\am=2m_\am$ such that $0\le m_\am<4a$ and $m_\am\equiv 2$ $(\text{mod }4)$ by Lemma \ref{am-irr}. Let $c_\am=\frac{b_\am^2-32p}{4a_\am}\in\ZZ$. By Lemma \ref{ZY-thm-new} there exist integers $T$ and $U$ such that $2\varepsilon_{32p}=2(q+r\sqrt{8p})=(T+U\sqrt{2p})^2$; so, $2q=T^2+2pU^2$ and $2r=TU$. Then we have that
$$2(q-rm)=T^2+2pU^2-TU\frac{b}{2}
=\Big(T-\frac{Ub}{4}\Big)^2-\frac{U^2ac}{4},$$
$$2(q+rm)=T^2+2pU^2+TU\frac{b}{2}
=\Big(T+\frac{Ub}{4}\Big)^2-\frac{U^2ac}{4},$$
and 
$$2(q+rm_\am)=T^2+2pU^2+TU\frac{b_\am}{2}=\Big(T+\frac{Ub_\am}{4}\Big)^2-U^2ac_\am.$$
Since $a$ is odd, we have
\begin{eqnarray}\label{cal-Jacobi-symbol}
\Big(\frac{q-rm}{a}\Big)=\Big(\frac{q+rm}{a}\Big)=\Big(\frac{q+rm_\am}{a}\Big)=\Big(\frac{2}{a}\Big)
=(-1)^{\frac{a^2-1}{8}}.
\end{eqnarray}
Let 
$A:={\footnotesize \begin{pmatrix}1& m \\ 0&1\end{pmatrix}}$, 
$A_\am:={\footnotesize \begin{pmatrix}1& (m_\am-2)/4\\ 0&1\end{pmatrix}}$, 
and  $B:={\footnotesize \begin{pmatrix}3& -1 \\ 1&0\end{pmatrix}}$. 
By \eqref{omega-2} we have
\begin{align*}
AM_{\omega_{32p}}A^{-1}
&=\begin{pmatrix}1&m\\0&1\end{pmatrix}
\begin{pmatrix}q&8pr\\r&q\end{pmatrix}
\begin{pmatrix}1&-m\\0&1\end{pmatrix}
=\begin{pmatrix}q+rm& -rac\\r&q-rm\end{pmatrix},
\allowdisplaybreaks\\
BAM_{\omega_{32p}}A^{-1}
&=\begin{pmatrix}3(q+rm)-r &-3rac-(q-rm)\\
q+rm &-rac \end{pmatrix},\allowdisplaybreaks\\
M_{\xi}
&=\begin{pmatrix}1&m\\0&a\end{pmatrix}
\begin{pmatrix}q&8pr\\r&q\end{pmatrix}
\begin{pmatrix}1&-\frac{m}{a}\\0&\frac{1}{a}\end{pmatrix}
=\begin{pmatrix}q+rm &-rc\\ra&q-rm\end{pmatrix},
\allowdisplaybreaks\\
BM_{\xi}
&=\begin{pmatrix}3(q+rm)-ra &-3rc-(q-rm)\\
q+rm &-rc\end{pmatrix},
\allowdisplaybreaks\\
M_{\omega_{32p}^{\am}}
&=\begin{pmatrix}1&2\\0&4\end{pmatrix}
\begin{pmatrix}q&8pr\\r&q\end{pmatrix}
\begin{pmatrix}1&-\frac{1}{2}\\0&\frac{1}{4}\end{pmatrix}
=\begin{pmatrix}q+2r &r(2p-1)\\4r&q-2r\end{pmatrix},
\allowdisplaybreaks\\
A_\am M_{\omega_{32p}^\am}A_\am^{-1}
&=\begin{pmatrix}1&\frac{m_\am-2}{4}\\0&1\end{pmatrix}
\begin{pmatrix}q+2r&r(2p-1)\\4r&q-2r\end{pmatrix}
\begin{pmatrix}1&-\frac{m_\am-2}{4}\\0&1\end{pmatrix}\allowdisplaybreaks\\
&=\begin{pmatrix}q+rm_\am& -rac_\am\\
4r&q-rm_\am\end{pmatrix},\allowdisplaybreaks\\
BA_\am M_{\omega_{32p}^\am}A_\am^{-1}
&=\begin{pmatrix}3(q+rm_\am)-4r &-3rac_\am-(q-rm_\am)\\
q+rm_\am &-rac_\am \end{pmatrix},
\allowdisplaybreaks\\
M_{\xi\ast \omega_{32p}^\am}
&=\begin{pmatrix}1&m_\am\\0&4a\end{pmatrix}
\begin{pmatrix}q&8pr\\r&q\end{pmatrix}
\begin{pmatrix}1&-\frac{m_\am}{4a}\\0&\frac{1}{4a}\end{pmatrix}
=\begin{pmatrix}q+rm_\am &-rc_\am\\
4ra&q-rm_\am\end{pmatrix},\allowdisplaybreaks\\
BM_{\xi\ast\omega_{32p}^\am}
&=\begin{pmatrix}3(q+rm_\am)-4ra &-3rc_\am-(q-rm_\am)\\
q+rm_\am &-rc_\am\end{pmatrix}.
\end{align*}
Note that $q+rm>0$ and $q+rm_\am>0$. By Lemma \ref{matrix-lem} and Lemma \ref{HSDS-new}, we have
\begin{align}
\label{last-eq-1-new}
\Psi(\omega_{32p})
&=n(\omega_{32p})=n_{AM_{\omega_{32p}}A^{-1}}=n_{BAM_{\omega_{32p}}A^{-1}},\allowdisplaybreaks\\
\label{last-eq-2-new}
\Psi(\xi)
&=n(\xi)=n_{M_{\xi}}=n_{BM_{\xi}},\allowdisplaybreaks\\
\label{last-eq-3-new}
\Psi(\omega_{32p}^\am)
&=n(\omega_{32p}^\am)
=n_{BA_\am M_{\omega_{32p}^\am}A_\am^{-1}},\allowdisplaybreaks\\
\label{last-eq-4-new}
\Psi(\xi\ast\omega_{32p}^\am)
&=n(\xi\ast\omega_{32p}^\am)=n_{BM_{\xi\ast\omega_{32p}^\am}}.
\end{align}
\noindent
{\bf{Case 1.}} Suppose that $p\equiv 3$ $(\text{mod }8)$; hence, $r$ is odd by Lemma \ref{Wil-lemma}. 
For \eqref{lem-4.4.1}, by \eqref{last-eq-1-new} and \eqref{last-eq-2-new}, 
it suffices to show that
$$\frac{2q}{r}-3-12s(q-rm,r) \equiv \chi_{-4}(a)\Big(\frac{2q}{ra}-3-12s(q-rm,ra)\Big) \quad (\text{mod }8).$$
Since $a\equiv \chi_{-4}(a)$ $(\text{mod }4)$ for odd $a$, we have that 
$$\frac{2q}{r}-3-\chi_{-4}(a)\Big(\frac{2q}{ra}-3\Big) \equiv 3(\chi_{-4}(a)-1) \quad (\text{mod }8).$$
Therefore, by \eqref{3.2.5-new} it suffices to show that 
$$-6s(q-rm,r)+6\chi_{-4}(a)s(q-rm,ra) \equiv \frac{\chi_{-4}(a)-1}{2} \quad (\text{mod }4).$$
By \eqref{3.2.6-new} we have
\begin{equation}\label{suffice-eq-new-1}
\Big(\frac{q-rm}{r}\Big)=(-1)^{\frac{1}{2}\left(\frac{r-1}{2}-6rs(q-rm,r)\right)}
\end{equation}
and
\begin{equation}\label{suffice-eq-new-2}
\Big(\frac{q-rm}{ra}\Big)=(-1)^{\frac{1}{2}\left(\frac{ra-1}{2}-6ras(q-rm,ra)\right)}.
\end{equation}
By dividing \eqref{suffice-eq-new-1} by \eqref{suffice-eq-new-2}, we get that
\begin{equation}\label{suffice-eq-new-3}
\Big(\frac{q-rm}{a}\Big)=(-1)^{\frac{1}{2}rZ}=(-1)^{\frac{1}{2}Z},
\end{equation}
where $Z=(1-a)/2-6s(q-rm,r)+6\chi_{-4}(a)s(q-rm,ra)$ because $r$ is odd and $a\equiv \chi_{-4}(a)$ $(\text{mod }4)$. By \eqref{cal-Jacobi-symbol} and \eqref{suffice-eq-new-3}, we have 
$$
-6s(q-rm,r)+6\chi_{-4}(a)s(q-rm,ra)\equiv\frac{a-1}{2}+\frac{a^2-1}{4}
\equiv \frac{\chi_{-4}(a)-1}{2}
\quad (\text{mod }4)$$
as desired. 
\smallskip

\noindent
{\bf{Case 2.}} Suppose that $p\equiv 7$ $(\text{mod }8)$; so, $r$ is even by Lemma \ref{Wil-lemma}. Note that $q+rm$ and $q+rm_\am$ are odd. For the first congruence of \eqref{lem-4.4.2}, by \eqref{last-eq-1-new}, \eqref{last-eq-2-new}, \eqref{last-eq-3-new} and \eqref{last-eq-4-new}, it suffices to show that
\begin{eqnarray*}
&&\frac{-r-rac}{q+rm}-12s(-rac,q+rm)
-\frac{-4r-rac_\am}{q+rm_\am}+12s(-rac_\am,q+rm_\am)\\
&\equiv&\chi_{-4}(a)\Big(\frac{-ra-rc}{q+rm}-12s(-rc,q+rm)
-\frac{-4ra-rc_\am}{q+rm_\am}+12s(-rc_\am,q+rm_\am)\Big)\\
&& (\text{mod }8).
\end{eqnarray*}
We have that both
\begin{equation}\label{explicit-1}
\frac{-r-rac}{q+rm}
-\chi_{-4}(a)\frac{-ra-rc}{q+rm}
=-r(a-\chi_{-4}(a)) \frac{c-\chi_{-4}(a)}{q+rm}
\end{equation}
and 
\begin{equation*}
-\frac{-4r-rac_\am}{q+rm_\am}
+\chi_{-4}(a)\frac{-4ra-rc_\am}{q+rm_\am}
=r(a-\chi_{-4}(a))\frac{c_\am-4\chi_{-4}(a)}{q+rm_\am}
\end{equation*}
are divisible by $8$ since $r$ is even and $a\equiv \chi_{-4}(a)$ $(\text{mod }4)$. Thus, by \eqref{3.2.5-new} it suffices to show that 
\begin{eqnarray}\label{2-wts-eq} 
&&-6s(-rac,q+rm)+6\chi_{-4}(a)s(-rc,q+rm)\nonumber\\
&\equiv&
-6s(-rac_\am,q+rm_\am)+6\chi_{-4}(a)s(-rc_\am,q+rm_\am)
\quad (\text{mod }4).
\end{eqnarray}
By \eqref{3.2.6-new} we have
\begin{align}
\label{2-suffice-eq-new-1}
\Big(\frac{-rac}{q+rm}\Big)
&=(-1)^{\frac{1}{2}\left(\frac{q+rm-1}{2}-6(q+rm)s(-rac,q+rm)\right)},
\allowdisplaybreaks\\
\label{2-suffice-eq-new-2}
\Big(\frac{-rc}{q+rm}\Big)
&=(-1)^{\frac{1}{2}\left(\frac{q+rm-1}{2}-6(q+rm)s(-rc,q+rm)\right)}, 
\allowdisplaybreaks\\
\label{2-suffice-eq-new-3}
\Big(\frac{-rac_\am}{q+rm_\am}\Big)
&=(-1)^{\frac{1}{2}\left(\frac{q+rm_\am-1}{2}-6(q+rm_\am)s(-rac_\am,q+rm_\am)\right)},
\allowdisplaybreaks\\
\label{2-suffice-eq-new-4}
\Big(\frac{-rc_\am}{q+rm_\am}\Big)
&=(-1)^{\frac{1}{2}\left(\frac{q+rm_\am-1}{2}-6(q+rm_\am)s(-rc_\am,q+rm_\am)\right)}.
\end{align}
By dividing or multiplying \eqref{2-suffice-eq-new-1} by \eqref{2-suffice-eq-new-2}, we have 
\begin{equation*}
\Big(\frac{a}{q+rm}\Big)=(-1)^{\frac{1}{2}Z_1},
\end{equation*}
where $Z_1=(1-\chi_{-4}(a))(q+rm-1)/2-6(q+rm)s(-rac,q+rm)+6\chi_{-4}(a)(q+rm)s(-rc,q+rm)$. By \eqref{cal-Jacobi-symbol} and the law of quadratic reciprocity, 
\begin{equation*}
Z_1\equiv \frac{(a-1)(q+rm-1)}{2}+\frac{a^2-1}{4} \quad (\text{mod }4).
\end{equation*}
Thus, we have
\begin{eqnarray}\label{2-suffice-eq-new-5}
&&-6(q+rm)s(-rac,q+rm)+6\chi_{-4}(a)(q+rm)s(-rc,q+rm)
\nonumber\\
&\equiv& (a+\chi_{-4}(a)-2)\frac{q+rm-1}{2}+\frac{a^2-1}{4} 
\nonumber\\
&\equiv& \frac{a^2-1}{4} \qquad\qquad (\text{mod }4)
\end{eqnarray}
because $a+\chi_{-4}(a)-2\equiv 0$ $(\text{mod }4)$. By dividing or multiplying \eqref{2-suffice-eq-new-3} by \eqref{2-suffice-eq-new-4}, we have 
\begin{equation*}
\Big(\frac{a}{q+rm_\am}\Big)=(-1)^{\frac{1}{2}Z_2},
\end{equation*}
where $Z_2=(1-\chi_{-4}(a))(q+rm_\am-1)/2-6(q+rm_\am)s(-rac_\am,q+rm_\am)+6\chi_{-4}(a)(q+rm_\am)s(-rc_\am,q+rm_\am)$. By \eqref{cal-Jacobi-symbol} and the law of quadratic reciprocity,
\begin{equation*}
Z_2\equiv \frac{(a-1)(q+rm_\am-1)}{2}+\frac{a^2-1}{4} \quad (\text{mod }4).
\end{equation*}
Similarly, we have
\begin{eqnarray}\label{2-suffice-eq-new-6}
&&-6(q+rm_\am)s(-rac_\am,q+rm_\am)+6\chi_{-4}(a)(q+rm_\am)s(-rc_\am,q+rm_\am)
\nonumber\\
&\equiv& \frac{a^2-1}{4} \qquad\qquad (\text{mod }4).
\end{eqnarray}
Therefore, we get \eqref{2-wts-eq} by \eqref{2-suffice-eq-new-5} and \eqref{2-suffice-eq-new-6}. 

Now we show the second congruence of \eqref{lem-4.4.2} as follows. 
We take $\xi=-1/\omega_{32p}^\am=\frac{-4+\sqrt{32p}}{2(2p-1)}$; then $\xi$ is equivalent to $\omega_{32p}^\am$ and satisfies 
$q+rb/2=q-2r>\varepsilon'_{32p}>0$. By \eqref{explicit-1} and \eqref{2-suffice-eq-new-5}, we have that 
$$\Psi([\omega_{32p}]_{\sim_+})
-\chi_{-4,-8p}^{(32p)}([\omega_{32p}^\am]_{\sim_+})
\Psi([\omega_{32p}^\am]_{\sim_+})
\equiv \frac{(2p-1)^2-1}{2}
\equiv 2p(p-1)
\equiv 4 \quad (\text{mod }8)$$
as desired.
\end{proof}
\smallskip

\begin{proof}[Proofs of Theorems \ref{main-thm-new} and \ref{main-thm-new-new}]\, 
By Lemma $\ref{pf-lem-3-new}$ we have that 
$$
\begin{array}{lllll}
\Psi(\omega_{32p})\equiv -\Psi(\omega_{32p}^\am) \quad &(\text{mod }8) 
&&\text{if } p\equiv 3 &(\text{mod }8),\\
\Psi(\omega_{32p})\equiv -\Psi(\omega_{32p}^\am)+4 \quad &(\text{mod }8) 
&&\text{if } p\equiv 7 &(\text{mod }8).
\end{array}
$$
Thus, Theorem \ref{main-thm-new} implies Theorem \ref{main-thm-new-new} since $h(8p)$ is odd.
Now we show Theorem \ref{main-thm-new} as follows. 
We recall that $\mathfrak{X}_{32p}=\{\omega_{32p},\omega_{32p}^{\am}\}\oplus \phi_{32p}(\mathfrak{X}_{8p})$, where we understand that a map $\phi_{32p}:\mathfrak{X}_{8p}\to\mathfrak{X}_{32p}$ corresponds to the map $\phi_{32p}:\cl(8p)\to\cl(32p)$ that is defined in Section \ref{sec-3}.
By applying Theorem \ref{thm-KM} to $(d_1,d_2)=(-4,-8p)$, we have that
\begin{eqnarray}\label{app-KM-new}
&&h(-8p)\nonumber\\
&=&\frac{1}{3}\sum_{[\eta]_\sim\in\mathfrak{X}_{32p}}\chi_{-4,-8p}^{(32p)}([\eta]_{\sim_+})\Psi([\eta]_{\sim_+})\nonumber\\
&=&\sum_{[\eta]_\sim\in\phi_{32p}(\mathfrak{X}_{8p})}\bigg(\chi_{-4,-8p}^{(32p)}([\eta]_{\sim_+})\frac{\Psi([\eta]_{\sim_+})}{3}
+\chi_{-4,-8p}^{(32p)}([\eta\ast\omega_{32p}^\am]_{\sim_+})\frac{\Psi([\eta\ast\omega_{32p}^\am]_{\sim_+})}{3}
\bigg).
\end{eqnarray}
For $[\eta]_\sim\in \mathfrak{X}_{32p}$, we note that $[\eta^\op]_\sim$ is the inverse of $[\eta]_\sim$  in $\fX_{32p}$. 
Also, for $[\xi]_\sim\in \phi_{32p}(\fX_{8p})$ such that $[\xi]_\sim\neq [\omega_{32p}]_\sim$, we have that $[\xi]_\sim\neq[\xi^\op]_\sim$ in $\phi_{32p}(\fX_{8p})$ because $\phi_{32p}(\fX_{8p})$ is a group of odd order. 
Therefore, there exist quadratic irrationals $\omega_{32p}$, $\xi_{(1)}$, $\xi_{(2)}$, $\cdots$, $\xi_{(m)}$ such that 
$$\phi_{32p}(\fX_{8p})=\{[\omega_{32p}]_\sim, [\xi_{(1)}]_\sim,[\xi_{(1)}^\op]_\sim,\cdots,[\xi_{(m)}]_\sim,[\xi_{(m)}^\op]_\sim\},$$ 
where $h(8p)=2m+1$. It follows that
\begin{align*}
\fX_{32p}=\{ &[\omega_{32p}]_\sim, [\xi_{(1)}]_\sim,[\xi_{(1)}^\op]_\sim,\cdots,
[\xi_{(m)}]_\sim,[\xi_{(m)}^\op]_\sim,\\
 &[\omega_{32p}^\am]_\sim, 
 [\xi_{(1)}\ast\omega_{32p}^\am]_\sim,[\xi_{(1)}^\op\ast\omega_{32p}^\am]_\sim,\cdots,[\xi_{(m)}\ast\omega_{32p}^\am]_\sim,[\xi_{(m)}^\op\ast\omega_{32p}^\am]_\sim\}
\end{align*} 
as $\fX_{32p}=\{[\omega_{32p}]_\sim,[\omega_{32p}^\am]_\sim\}\oplus \phi_{32p}(\fX_{8p})$.
By \eqref{app-KM-new} and Lemma \ref{pf-lem-2-new}, we have 
\begin{eqnarray*}
&&h(-8p)-h(8p)\Big(\frac{\Psi(\omega_{32p})}{3}-\frac{\Psi(\omega_{32p}^\am))}{3}\Big)\\
&=&2\Bigg(\sum_{i=1}^m
\bigg(\chi_{-4,-8p}^{(32p)}([\xi_{(i)}]_{\sim_+})\frac{\Psi([\xi_{(i)}]_{\sim_+})}{3}
+\chi_{-4,-8p}^{(32p)}([\xi_{(i)}\ast\omega_{32p}^\am]_{\sim_+})
\frac{\Psi([\xi_{(i)}\ast\omega_{32p}^\am]_{\sim_+})}{3}\\
&&-\frac{\Psi([\omega_{32p}]_{\sim_+})}{3}
-\chi_{-4,-8p}^{(32p)}([\omega_{32p}^\am]_{\sim_+})\frac{\Psi([\omega_{32p}^\am]_{\sim_+})}{3}\bigg)
\Bigg).
\end{eqnarray*}
Therefore, by Lemma $\ref{pf-lem-3-new}$ we get 
$$h(-8p)- h(8p)\Big(\frac{\Psi(\omega_{32p})}{3}-\frac{\Psi(\omega_{32p}^\am)}{3}\Big) 
\equiv 0 \quad (\text{mod }16)$$
as desired. 
\end{proof}

\bigskip

\section{A conjectural congruence}  
In Appendix, we have observed that a stronger congruence than Theorem \ref{main-thm-new} holds for every prime $p$ such that $p\equiv 3$ $(\text{mod }4)$ and $p\le 1000$. We were led to the following conjecture:  
\begin{conj}\label{conj-new}
Let $p$ be a prime such that $p\equiv 3$ $(\text{mod }4)$. Then we have 
$$h(-8p)\equiv  
h(8p)\Big(\frac{\Psi(2\sqrt{2p})}{3}-\frac{\Psi(\frac{1+\sqrt{2p}}{2})}{3}\Big) \quad (\text{mod }32).$$
\end{conj}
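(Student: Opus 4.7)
The plan is to follow the skeleton of the proof of Theorem \ref{main-thm-new} while sharpening every modular step from mod $8$ to mod $16$. Applying Theorem \ref{thm-KM} to $(d_1,d_2)=(-4,-8p)$, decomposing $\mathfrak{X}_{32p} = \{[\omega_{32p}]_\sim,[\omega_{32p}^{\am}]_\sim\} \oplus \phi_{32p}(\mathfrak{X}_{8p})$, and invoking the $\op$-pairing of Lemma \ref{pf-lem-2-new} yield, as in the proof of Theorem \ref{main-thm-new},
\[
h(-8p) - h(8p)\Big(\frac{\Psi(\omega_{32p})}{3}-\frac{\Psi(\omega_{32p}^{\am})}{3}\Big) = 2\sum_{i=1}^{m} T_i,
\]
where $h(8p)=2m+1$ and each $T_i$ is a paired difference of Hirzebruch sums (divided by $3$) attached to an orbit $\{[\xi_{(i)}]_\sim,[\xi_{(i)}^\op]_\sim\}$ of the $\op$-involution on $\phi_{32p}(\mathfrak{X}_{8p})\setminus\{[\omega_{32p}]_\sim\}$. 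The task thus reduces to showing $T_i \equiv 0$ $(\text{mod }16)$ for every $i$, i.e., to strengthening Lemma \ref{pf-lem-3-new} from mod $8$ to mod $16$.

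To implement this, I would pick a representative $\xi_{(i)}=(b+\sqrt{32p})/(2a)$ with $a$ positive and odd (Lemma \ref{pf-lem-1-new}), revisit the explicit matrix-trace expressions for $n(\omega_{32p})$, $n(\xi_{(i)})$, $n(\omega_{32p}^{\am})$ and $n(\xi_{(i)}\ast\omega_{32p}^{\am})$ from the proof of Lemma \ref{pf-lem-3-new}, and track every summand modulo $16$. Two sources of extra $2$-divisibility not exploited at the mod $8$ level should come into play: first, the identity $2\varepsilon_{32p}=(T+U\sqrt{2p})^{2}$ from Lemma \ref{ZY-thm-new}, combined with the parity statement of Lemma \ref{Wil-lemma}, can be pushed to pin down $T,U,q,r$ modulo higher powers of $2$; second, the hypothesis $[\xi_{(i)}]_\sim\in\phi_{32p}(\mathfrak{X}_{8p})$ forces $a$ to be a norm from $\mathcal{O}_{8p}$, imposing a constraint on $a$ $(\text{mod }8)$ which strengthens the bound $a\equiv\chi_{-4}(a)$ $(\text{mod }4)$ used in Lemma \ref{pf-lem-3-new} to a finer congruence modulo $8$ in the relevant cases.

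Alongside this, the Jacobi-to-Dedekind identities \eqref{suffice-eq-new-3}, \eqref{2-suffice-eq-new-5}, \eqref{2-suffice-eq-new-6} must be upgraded from mod-$4$ statements to mod-$8$ statements about $6ks(h,k)$, which goes beyond the mod-$2$ content of \eqref{3.2.6-new}. A plausible route is to combine Dedekind reciprocity \eqref{3.2.3-new} with an auxiliary two-term identity expressing $6ks(h,k)$ $(\text{mod }8)$ in terms of Jacobi symbols of $h$ modulo prime-power parts of $k$. Once these sharpenings are in place, the systematic cancellation between the $\xi_{(i)}$ and $\xi_{(i)}\ast\omega_{32p}^{\am}$ contributions -- which already yields $T_i\equiv 0$ $(\text{mod }8)$ in the proof of Theorem \ref{main-thm-new} -- should gain one more factor of $2$ and deliver $T_i\equiv 0$ $(\text{mod }16)$.

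The main obstacle is precisely this sharpening of Lemma \ref{pf-lem-3-new}: the existing mod-$8$ argument is already tight, since \eqref{3.2.6-new} records only the mod-$2$ information of $6ks(h,k)$ and the congruence $a+\chi_{-4}(a)-2\equiv 0$ $(\text{mod }4)$ is used at exactly that level. I expect the most delicate case to be $p\equiv 7$ $(\text{mod }8)$: the ``$+4$'' correction appearing in the second part of Lemma \ref{pf-lem-3-new}\eqref{lem-4.4.2} signals that the arithmetic is on the edge of what the Dedekind-sum toolkit \eqref{3.2.1-new}--\eqref{3.2.6-new} can provide, so extracting the next dyadic level of cancellation will likely require either a new Dedekind-sum identity tailored to the factorization $2\varepsilon_{32p}=(T+U\sqrt{2p})^{2}$ or a finer structural input from the $2$-part of $\cl(32p)$ than the decomposition used so far.
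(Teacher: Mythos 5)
There is a genuine gap here, and it is worth being precise about its nature: the statement you are asked to prove is stated in the paper only as Conjecture \ref{conj-new}, supported by the numerical data in Tables A1--A3 for $p\le 1000$; the paper offers no proof, so there is nothing to compare your argument against except the mod-$16$ proof of Theorem \ref{main-thm-new} that you correctly take as your template. Your reduction of the problem is sound as far as it goes: the decomposition via Theorem \ref{thm-KM}, the splitting $\fX_{32p}=\{[\omega_{32p}]_\sim,[\omega_{32p}^{\am}]_\sim\}\oplus\phi_{32p}(\fX_{8p})$, and the $\op$-pairing of Lemma \ref{pf-lem-2-new} do reduce the conjecture to a mod-$16$ strengthening of Lemma \ref{pf-lem-3-new} (or, slightly more weakly, to $\sum_i T_i\equiv 0$ $(\text{mod }16)$ rather than termwise vanishing). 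But that strengthening is exactly the content of the conjecture, and your proposal does not supply it; it only names candidate sources of extra $2$-divisibility without verifying that any of them delivers the needed factor of $2$.

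The critical unproved step is your claim that the Jacobi-to-Dedekind identities can be ``upgraded from mod-$4$ statements to mod-$8$ statements about $6ks(h,k)$.'' Equation \eqref{3.2.6-new} determines $6ks(h,k)$ only modulo $4$ (it records a single sign), and every congruence in the proof of Lemma \ref{pf-lem-3-new} is calibrated to exactly that precision; as you yourself observe, the existing argument is tight. No identity in the toolkit \eqref{3.2.5-new}--\eqref{3.2.6-new} gives the mod-$8$ residue of $6ks(h,k)$, and the ``auxiliary two-term identity'' you posit is not known to exist in the required generality --- producing it would be the actual mathematical content of a proof. Likewise, the claim that membership of $[\xi_{(i)}]_\sim$ in $\phi_{32p}(\fX_{8p})$ pins down $a$ modulo $8$ is asserted, not established, and the suggestion that Lemma \ref{ZY-thm-new} and Lemma \ref{Wil-lemma} can be ``pushed to pin down $T,U,q,r$ modulo higher powers of $2$'' is left entirely open. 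In short, your proposal is an honest research plan that correctly locates the obstruction, but it does not constitute a proof, and the statement remains open.
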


\bigskip

\section*{Appendix}

Let $p\equiv 3$ $(\text{mod }4)$ be a prime. For small primes $p$, we list the factorizations of 
$$\begin{array}{l}
\displaystyle H_1(8p)
:=h(8p)\Big(\frac{\Psi(2\sqrt{2p})}{3}-\frac{\Psi(\frac{1+\sqrt{2p}}{2})}{3}\Big)-h(-8p) \,\text{ and }\\
\displaystyle H_2(8p)
:=2h(8p)\frac{\Psi(2\sqrt{2p})}{3}-h(-8p).
\end{array}$$ 
Here, $h(\Delta)$ is the class number of the quadratic order with discriminant $\Delta$, and 
$\Psi$ is the Hirzebruch sum. 

\smallskip

\FloatBarrier
\begin{table}[!h]
{Table A1. $H_1(8p)$ and $H_2(8p)$ for $p\equiv 3$ $(\text{mod }4)$ such that $h(8p)=1$, $p\le 23$}
{\tabcolsep6pt\begin{tabular}{ccccccccll}
\hline
$p$ 
&$p$ $(\text{mod }8)$
&$\Psi(2\sqrt{2p})$ 
&$\Psi(\frac{1+\sqrt{2p}}{2})$ 
&$h(8p)$ 
&$h(-8p)$ 
&&$H_1(8p)$
&&$H_2(8p)$\\ 
\hline
3 &3 &7 &1  &1 &2 
&&0 &&$2^3\cdot3^{-1}$\\
\hline
7 &7 &12 &0  &1 &4 
&&0 &&$2^2$\\
\hline
11 &3 &15 &9  &1 &2 
&&0 &&$2^3$\\
\hline
19 &3 &21 &3  &1 &6 
&&0 &&$2^3$\\
\hline
23 &7 &24 &12  &1 &4 
&&0 &&$2^2$\\
\hline
\end{tabular}}{}
\end{table}

\FloatBarrier
\begin{table}[!h]%
{Table A2. $H_1(8p)$ and $H_2(8p)$ for $p\equiv 3$ $(\text{mod }8)$ such that $h(8p)\neq1$, $p\le1000$}
{\tabcolsep6pt\begin{tabular}{cccccllll}
\hline
$p$ 
&$\Psi(2\sqrt{2p})$ 
&$\Psi(\frac{1+\sqrt{2p}}{2})$ 
&$h(8p)$ 
&$h(-8p)$ 
&&$H_1(8p)$
&&$H_2(8p)$\\ 
\hline
163 &63 &9  &3 &22 
&&$2^5$ &&$2^3\cdot13$\\
\hline
467 &105 &15 &3  &26 
&&$2^6$ &&$2^3\cdot 23$\\
\hline
491 &111 &9 &5  &10 
&&$2^5\cdot5$ &&$2^3\cdot 3^2\cdot 5$\\
\hline
563 &105 &15 &5  &22 
&&$2^7$ &&$2^3\cdot 41$\\
\hline
739 &147 &21 &3  &30 
&&$2^5\cdot 3$ &&$2^3\cdot 3\cdot 11$\\
\hline
827 &141 &27 &9  &22 
&&$2^6\cdot 5$ &&$2^3\cdot 103$\\
\hline
883 &147 &21 &5  &50 
&&$2^5\cdot 5$ &&$2^3\cdot 5\cdot 11$\\
\hline
\end{tabular}}{}
\end{table}

\FloatBarrier
\begin{table}[!h]%
{Table A3. $H_1(8p)$ and $H_2(8p)$ for $p\equiv 7$ $(\text{mod }8)$ such that $h(8p)\neq1$, $p\le1000$}
{\tabcolsep6pt\begin{tabular}{cccccllll}
\hline
$p$ 
&$\Psi(2\sqrt{2p})$ 
&$\Psi(\frac{1+\sqrt{2p}}{2})$ 
&$h(8p)$ 
&$h(-8p)$ 
&&$H_1(8p)$
&&$H_2(8p)$\\ 
\hline
71 &48 &12  &3 &4 
&&$2^5$ &&$2^2\cdot23$\\
\hline
127 &66 &18  &3 &16 
&&$2^5$ &&$2^2\cdot29$\\
\hline
647 &156 &48  &7 &28 
&&$2^5\cdot 7$ &&$2^2\cdot 5^2\cdot 7$\\
\hline
743 &156 &48  &5 &20 
&&$2^5\cdot 5$ &&$2^2\cdot 5^3$\\
\hline
823 &168 &60  &3 &44
&&$2^6$ &&$2^2\cdot 73$\\
\hline
967 &192 &60  &7 &52
&&$2^8$ &&$2^2\cdot 211$\\
\hline
\end{tabular}}{}
\end{table}

\bigskip

\bigskip

\noindent Institute of Mathematical Sciences,

\noindent Ewha Womans University,

\noindent Seoul, Korea

\noindent E-mail: jigu.kim.math@gmail.com\bigskip

\noindent Graduate School of Technology Industrial and Social Sciences,

\noindent Tokushima University,

\noindent Tokushima, Japan

\noindent E-mail: mizuno.yoshinori@tokushima-u.ac.jp

\end{document}